\theoremstyle{plain}
\newtheorem{theorem}{Theorem}
\newtheorem{lemma}{Lemma}
\newtheorem{proposition}{Proposition}
\theoremstyle{definition}
\newtheorem{definition}{Definition}
\theoremstyle{remark}
\newtheorem{remark}[theorem]{Remark}
\numberwithin{equation}{section}
\def\rn{\mathbb{R}^{N}}
\def\capp{{\text{\rm cap}}_{p}}
\def\dive{{\rm div}}
\def\rife#1{(\ref{#1})}
\def\sob{W^{1,p}_{0}(\Omega)}
\newcommand{\elle}[1]{L^{#1}(\Omega)}
\newcommand{\cM}{\mathcal{M}}
\def\pw-1p'{L^{p'}(0,T;W^{-1,p'}(\Omega))}
\def\be{\begin{equation}}
\def\ee{\end{equation}}
\def\vare{\varepsilon}
\def\psob{L^{p}(0,T;W^{1,p}_{0}(\Omega))}
\def\dys{\displaystyle}
\def\al{\alpha}
\def\adixt#1{a(t,x,#1)}
\def\pmisure{\cM(Q)}
\def\rn{\mathbb{R}^{N}}
\def\be{\begin{equation}}
\def\ee{\end{equation}}
\def\rife#1{(\ref{#1})}
\def\re{\mathbb{R}}
\def\tku{T_{k}(u)}
\def\vare{\varepsilon}
\def\capp{\text{\text{cap}}_{p}}
\def\dive{{\rm div}}
\def\sob{W^{1,p}_{0}(\Omega)}
\def\into{\int_{\Omega}}
\def\w-1p'{W^{-1,p'}(\Omega)}
\def\w-1pd{W^{-1,p'}(D)}
\def\pw-1p'{L^{p'}(0,T;W^{-1,p'}(\Omega))}
\def\dys{\displaystyle}
\def\luq{L^{1}(Q)}
\def\lp'n{(L^{p'}(\Omega))^{N}}
\def\nuk{\nu^{k}}
\def\w-1p'{W^{-1,p'}(\Omega)}
\def\w-1pd{W^{-1,p'}(D)}
\def\w-1p'{W^{-1,p'}(\Omega)}
\def\m{\noalign{\medskip}}
\def\psob{L^{p}(0,T;W^{1,p}_{0}(\Omega))}
\def\luo{L^{1}(\Omega)}
\def\parelle#1{L^{#1}(Q)}
\def\vfi{\varphi}
\def\car#1{\raise2pt\hbox{$\chi$}_{#1}}
\def\adixt#1{a(t,x,#1)}
\newcommand{\rest}{\>\hbox{\vrule width.2pt \vbox to 7pt{\vfill\hrule width 7pt height.2pt}}\>}
\begin{document}


\title[Nonlinear Parabolic Equations with General Measure Data]
      {On the notion of renormalized solution to nonlinear parabolic equations with general 
measure data}


\author[F. Petitta]{Francesco Petitta}
\address[F. Petitta]{Dipartimento di Scienze di Base e Applicate
per l' Ingegneria, ``Sapienza", Universit\`a di Roma, Via Scarpa 16, 00161 Roma, Italy.} \email{francesco.petitta@sbai.uniroma1.it}
\author[A. Porretta]{Alessio Porretta} \address[A. Porretta]{Dipartimento di Matematica\\ Universit\`a di Roma Tor Vergata\\ Via della ricerca scientifica 1\\ 00133 Roma\\ Italy}
\email{porretta@mat.uniroma2.it}

\thanks{Received   May ..., Accepted }
\keywords{Nonlinear parabolic equations $\cdot$ Parabolic capacity $\cdot$ Measure data} 
\subjclass[2010]{35K55 $\cdot$ 35R06 $\cdot$  35R05}

\begin{abstract}

Here we introduce a new notion of renormalized solution  to nonlinear  parabolic problems with general measure data  whose model is 
$$
\begin{cases}
  u_t-\Delta_{p} u =\mu & \text{in}\ (0,T)\times\Omega,\\
 u=u_0  & \text{on}\ \{0\} \times \Omega,\\
 u=0 &\text{on}\ (0,T)\times\partial\Omega,
  \end{cases}
  $$
for any, possibly singular, nonnegative  bounded measure $\mu$. We prove existence of such a solutions and we discuss their main properties.  
\end{abstract}

\maketitle

\section{Introduction}
Let $\Omega$ be an open bounded subset of $\rn$, $T>0$, $p>1$, and let us consider the model problem 
 \begin{equation}\label{1}
\begin{cases}
  u_t-\Delta_{p} u =\mu & \text{in}\ (0,T)\times\Omega,\\
 u=u_0  & \text{on}\ \{0\} \times \Omega,\\
 u=0 &\text{on}\ (0,T)\times\partial\Omega,
  \end{cases}
  \end{equation}
where $\Delta_{p} u:= \dive(|\nabla u|^{p-2}\nabla u)$ is the $p$-Laplace operator and  $\mu$ is a bounded Radon measure on $Q=(0,T)\times\Omega$.

 If $\mu$ is a measure that does not charge sets of zero parabolic $p$-capacity (the so called \emph{diffuse measures}, see the definition in Section \ref{2.1} below), then a  notion of renormalized solution for problem \rife{1} was introduced in \cite{dpp}.  In the same paper the existence and uniqueness of such a  solution is proved. In \cite{DP}  a similar notion of entropy solution is also defined, and proved to be equivalent to the renormalized one.  The case in which  $\mu$ is a singular measure with respect to the $p$-capacity (i.e. $\mu$ admits a part that is concentrated on a set of zero $p$-capacity) was faced in \cite{pe2} if $p>\frac{2N+1}{N+1}$.   All these latest results are strongly based on a  decomposition theorem  given in \cite{dpp} for diffuse measures, the key point in the existence result  being the proof of the strong compactness of suitable truncations of the approximating solutions in the energy space.     

Recently,  in \cite{ppp} (see also \cite{ppp1}) the authors proposed a new approach to the same problem with diffuse measures as data. This approach  avoids to use the particular structure of the decomposition of the measure  and it seems more flexible  to handle a fairly general class of problems. In order to do that, the authors introduced a  definition of renormalized solution which is  closer to the one used for conservation laws in \cite{BCW} and to one of the existing formulations in the elliptic case (see \cite{DMOP} and \cite{DMM}).
 
Our goal is to extend the approach in \cite{ppp} to general, and possibly singular, measure data. In particular, we extend the notion of renormalized solution  given in \cite{ppp} to general measures and we prove an existence result in this framework. In order to avoid an excess of technicalities, for the sake of presentation we will deal with nonnegative data.  

The main advantage,  with this new approach, is that we do not need to pass through the usual key technical  step of the strong compactness of the truncations of the approximating solutions in order to prove the  existence result.  In fact, the possibility to prove existence of a solution by-passing the proof of strong convergence of truncations, which is  a heavy technical point, was already exploited in the stationary context in \cite{malu} using the particular structure of stationary diffuse measures. But, as we already mentioned, in the parabolic framework the situation is  more complicated due to the presence of the term $g_{t}$ in the decomposition formula (see \rife{gt} below) and we adopt a  different strategy, using  the approximation with equidiffuse measures as we already developed in \cite{ppp}.

Compared to our previous paper \cite{ppp}, we do not consider here zero  order terms in the equation since, as it is well known, in the case of singular data they produce, in general,  concentration phenomena and nonexistence results (see for instance \cite{bp, pe3}). Nevertheless, our existence result could easily be extended to the case of  lower order  nonlinearities with mild growth with respect to $u$ and $\nabla u$. 

\medskip
The paper is organized as follows. In section \ref{I} we give some preliminaries on the concept of $p$-capacity and on the functional spaces and the main notation we will use throughout the paper. Section \ref{II} will be devoted to set our main assumptions, to the definition of renormalized solution  and to the statement of the existence result, while in Section \ref{III}  we give the proof of our main result. In Section \ref{V} we finally discuss the relationship between the new approach and the previous ones and we extend the result to non-monotone operators.

\section{Preliminaries on capacity}\label{I}
\subsection{Parabolic $p$-capacity}\label{2.1}
The relevant notion in the study of problems as \rife{1} is the notion of  \emph{parabolic $p$-capacity}. 

We recall that for every $p > 1$ and every open subset
$U \subset Q$, the $p$-parabolic capacity of $U$ is given by (see \cite{pierre1,dpp})
$$
\capp(U)=\inf{ \Big\{\|u\|_{W} : u\in W,\ u \geq \chi_{U}\  \text{a.e. in}\  Q \Big\}},
$$
where 
$$
W=\big\{ u\in L^{p}(0,T;V) : u_{t}\in L^{p'}(0,T;V') \big\},
$$
being $V=\sob \cap \elle 2$ and $V'$  its dual space. As usual $W$ is endowed with the norm
$$
\|u\|_{W} =\|u\|_{ L^{p}(0,T;V)} + \|u_{t}\|_{ L^{p'}(0,T; V')}.
$$
The $p$-parabolic capacity $\capp$ is then extended to arbitrary Borel subsets $B \subset Q$ as
\[
\capp(B) = \inf{\Big\{ \capp(U) : B \subset U \text{ and } U \subset Q\ \text{is open}  \Big\}}.
\]

\vskip0.5em
\subsection{Diffuse measures and equidiffuse sequences}
Let $\cM(Q)$ denote the space of all bounded Radon measures on $Q$.  In the parabolic context this space is usually identified with the dual space of $C_{0}([0,T)\times\Omega )$, the space of all continuous functions that vanish at the parabolic boundary  $(0,T]\times\partial \Omega $.  Henceforth, we call a finite measure $\mu$ \emph{diffuse}\/ if it does not charge sets of zero $p$-parabolic capacity, i.e.\@ if $\mu(E) = 0$ for every Borel set $E \subset Q$ such that $\capp(E) = 0$.  The subspace of all diffuse measures in $Q$ will be denoted by $\cM_0(Q)$.

According to a representation theorem  proved in \cite{dpp}, for every $\mu \in \mathcal{M}_0(Q)$ there exist $f\in L^{1}(Q)$, $g\in
L^{p}(0,T;V)$ and $\chi\in \pw-1p'$ such that
\be\label{gt}
\mu = f + g_t + \chi \quad \text{in } \mathcal{D'}(Q).   
\ee

The presence of the term $g_t$ in the decomposition of a diffuse measure, that  is essentially due to the presence of diffuse measures which charges sections of the parabolic cylinder $Q$, gives some extra difficulties (with respect to the stationary case) in the study of this type of problems; in particular the proof of the strong convergence of suitable truncations of the approximating solutions is a hard technical issue.  For further considerations on this fact we refer to \cite{pe2}, \cite{ppp}, \cite{bvq} and references therein.   
\smallskip

As we said, in order to avoid those difficulties we adopt a  different strategy, that is essentially independent of the decomposition of the measure data. A crucial role will be played by  an important property enjoyed by the convolution of diffuse measures. We recall the following definition  (see \cite{BrePon:05a} and also \cite{ppp}):

\smallskip
\begin{definition}
A sequence of measures $(\mu_n)$ in $Q$ is \emph{equidiffuse} if for every $\vare>0$ there exists $\eta>0$ such that
\[
\capp(E)<\eta \quad \Longrightarrow \quad |\mu_n|(E)<\vare \quad \forall n\geq 1. 
\]  
\end{definition}
\smallskip

Let $\rho_n$ be a sequence of mollifiers on $Q$. The following result is proved in \cite{ppp}.
\begin{proposition}\label{equi}
If $\mu \in \cM_0(Q)$, then the sequence $(\rho_n \ast\mu)$ is equidiffuse.  
\end{proposition}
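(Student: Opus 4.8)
The plan is to reduce the equidiffuse property of the mollified sequence $(\rho_n \ast \mu)$ to the absolute continuity of $\mu$ with respect to the $p$-parabolic capacity, exploiting the fact that mollification with a fixed kernel has a controlled (in fact improving) effect on capacity. First I would recall that since $\mu \in \cM_0(Q)$ is a bounded diffuse measure, by inner regularity of the capacity and absolute continuity of $\mu$ one can show the quantitative statement: for every $\vare>0$ there is $\eta>0$ such that $\capp(E)<\eta$ implies $|\mu|(E)<\vare$ for the measure $\mu$ itself. This is the ``single measure'' version of equidiffuseness; it follows from a standard contradiction argument using that capacity is an outer capacity (so one may assume $E$ open) and that $\mu$ does not charge zero-capacity sets, together with compactness/exhaustion of $Q$. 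The real content is then to propagate this from $\mu$ to all the translates and averages that make up $\rho_n \ast \mu$.

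The key step is the capacitary estimate for mollification: for a Borel set $E \subset Q$ and $n$ large (so that the support of $\rho_n$ is small), one has $(\rho_n \ast \mu)(E) = \int_Q \rho_n(z) \, \mu(E - z) \, dz$ (interpreting the shift appropriately, after first restricting attention to $E$ compactly contained in $Q$ and $n$ large enough that $E-z \subset Q$ for all $z \in \supp \rho_n$), so it suffices to bound $|\mu|(E-z)$ uniformly for small $z$. For this I would use that translation is, up to a controlled constant and on compact subsets, capacity-nonincreasing in the relevant regime — more precisely, $\capp(E-z)$ stays small when $\capp(E)$ is small, uniformly for $z$ in a small neighborhood of the origin, because an admissible function $u \in W$ for $E$ gives, by translation, an admissible function for $E-z$ with comparable $W$-norm (the norm $\|\cdot\|_W$ is translation invariant in the interior). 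Combining this with the single-measure estimate for $\mu$ applied to each set $E-z$, and then integrating against the probability density $\rho_n$, yields $|\rho_n \ast \mu|(E) \le \sup_{z \in \supp \rho_n} |\mu|(E-z) < \vare$ once $\capp(E)<\eta'$ for a threshold $\eta'$ independent of $n$.

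I expect the main obstacle to be the boundary: a generic Borel set $E \subset Q$ need not be compactly contained in $Q$, and the naive translation $E - z$ may stick out of the parabolic cylinder, so the identity $(\rho_n\ast\mu)(E)=\int \rho_n(z)\mu(E-z)\,dz$ and the translation-invariance of capacity both require care near $(0,T)\times\partial\Omega$ and near $\{t=0\}$, $\{t=T\}$. The standard way around this is to fix $\vare$, use regularity of $\mu$ and of capacity to replace $E$ by a slightly larger open set and then split into a piece compactly contained in $Q$ (handled as above) and a thin collar near the parabolic boundary on which $|\mu|$ is small simply because $|\mu|$ is a finite measure and the collar can be taken of small $\mu$-measure — while noting that for $n$ large the mollified measure $\rho_n\ast\mu$ is supported away from the extreme time slices, so the collar contribution is genuinely controlled uniformly in $n$. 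Once this localization is in place, the argument is essentially the combination of Proposition-type absolute continuity for $\mu$ with the elementary behavior of $\capp$ under translation, and the uniformity in $n$ comes for free since the kernels $\rho_n$ are probability densities with shrinking support.
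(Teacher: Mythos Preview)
The paper does not prove this proposition here; it simply cites \cite{ppp}. So there is no in-paper argument to compare against.

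Your strategy is reasonable in outline, but there is a real gap at the translation step. You assert that an admissible competitor $u\in W$ for $E$ gives, by translation, an admissible competitor for $E-z$ with comparable $W$-norm because ``the norm $\|\cdot\|_W$ is translation invariant in the interior''. This is not true as stated: $W$ is built on $V=W^{1,p}_0(\Omega)\cap L^2(\Omega)$, and a spatial translate of $u$ lies in the analogous space over $(0,T)\times(\Omega-z')$, not in $W$. What is actually needed is a localized comparison lemma: for each compact $K\subset\!\subset Q$ there is $C_K$ with $\capp(F)\le C_K\,\capp(F+z)$ for every Borel $F\subset K$ and every small $z$. This can be obtained by multiplying the translated competitor by a fixed cut-off $\phi\in C^\infty_c(Q)$ equal to $1$ near $K$, but one must then verify that $\phi\,u(\cdot+z)\in W$ with $\|\phi\,u(\cdot+z)\|_W\le C_\phi\|u\|_W$, including control of the time-derivative piece $(\phi\,u(\cdot+z))_t=\phi_t\,u(\cdot+z)+\phi\,u_t(\cdot+z)$ in $L^{p'}(0,T;V')$. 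None of this is in your sketch. With that lemma supplied, your boundary-collar reduction and the single-measure absolute-continuity step would complete the argument; the constant $C_K$ depends only on the compact set chosen from $\vare$, so uniformity in $n$ is preserved.

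For comparison, the proof in \cite{ppp} avoids comparing capacities under translation altogether: it passes through the decomposition $\mu=f+g_t+\chi$ of \rife{gt} and handles each mollified piece separately --- $\rho_n\ast f$ via equi-integrability in $L^1$, and $\rho_n\ast g$, $\rho_n\ast\chi$ via uniform bounds in $L^p(0,T;V)$ and $L^{p'}(0,T;V')$, which pair directly against competitors in $W$. Your route, once the translation lemma above is filled in, would have the merit of bypassing \rife{gt} entirely, which is in fact closer to the spirit of the present paper.
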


\subsection{General measures and generalized gradient}Henceforward, we will say that a sequence $\{\mu_n \}\subset \mathcal{M}(Q)$ converges \emph{tightly} (or, equivalently, in the \emph{narrow topology of measures}) to a measure $\mu$ if
$$
\lim_{n\to\infty}\int_Q\varphi\ d\mu_n = \int_Q\varphi\ d\mu, \ \ \ \forall \ \varphi\in C(\overline{Q}).
$$

We point out that, at least for nonnegative measures,  tight convergence is equivalent to $\ast$-weak convergence provided the masses converge: that is,  $\mu_n$ converges tightly to $\mu$ if and only if $\mu_n$ converges to $\mu$  $\ast$-weak in $\mathcal{M}(Q)$ and $\mu_n (Q)$ converges to $\mu(Q)$. 
Via a standard convolution argument one can prove the following
\begin{lemma}\label{appc1}
Let $\mu\in \mathcal{M}(Q)$. Then there exists a sequence $\{\mu_n\}\subset C^{\infty}(Q)$ such that 
$$
\|\mu_n \|_{L^1 (Q)}\leq |\mu|_{\mathcal{M}(Q)}\,,
$$
and
$$
\mu_n \longrightarrow \mu \ \ \text{tightly in }\ \mathcal{M}(Q).
$$
\end{lemma}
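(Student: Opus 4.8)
The plan is to produce the approximating sequence by a standard mollification, handling the boundary-in-time carefully so that no mass is lost as $n\to\infty$. First I would extend $\mu$ to a measure on all of $\mathbb{R}^{N+1}$ by setting it equal to zero outside $\overline{Q}$ (more precisely, one wants to be slightly careful near $\{t=0\}$ and $\{t=T\}$: the cleanest device is to first translate/reflect in time, or simply to regard $\mu$ as a measure on $[0,T]\times\Omega$ and convolve only in the space variables while convolving in time against a mollifier supported in $(-1/n,0)$, say, so that the time-convolved measure lives on $(-1/n,T)\times\Omega$ and nothing escapes through $t=T$). In any case, let $\rho_n$ be a sequence of standard mollifiers on $\mathbb{R}^{N+1}$ (with the time component supported on the negative side to respect the parabolic orientation) and set $\mu_n:=\rho_n\ast\mu$, suitably restricted to $Q$. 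Then $\mu_n\in C^\infty(Q)$ automatically.

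The mass bound is the easy part: since $\rho_n\geq0$ and $\int\rho_n=1$, Fubini gives
$$
\|\mu_n\|_{L^1(Q)}=\int_Q|\rho_n\ast\mu|\,dx\,dt\leq\int_Q\big(\rho_n\ast|\mu|\big)\,dx\,dt\leq\int_{\mathbb{R}^{N+1}}\rho_n\ast|\mu|=|\mu|(\mathbb{R}^{N+1})=|\mu|_{\mathcal{M}(Q)},
$$
using that the extension adds no mass. For the tight convergence, I would first prove $\ast$-weak convergence: for $\varphi\in C(\overline Q)$, extend $\varphi$ continuously and boundedly to $\mathbb{R}^{N+1}$, write $\int_Q\varphi\,d\mu_n=\int\varphi\,(\rho_n\ast\mu)=\int(\check\rho_n\ast\varphi)\,d\mu$ with $\check\rho_n(z)=\rho_n(-z)$, and note $\check\rho_n\ast\varphi\to\varphi$ uniformly on compact sets by uniform continuity of $\varphi$; dominated convergence with respect to $|\mu|$ then yields $\int_Q\varphi\,d\mu_n\to\int_Q\varphi\,d\mu$. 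Taking $\varphi\equiv1$ (which is in $C(\overline Q)$) gives $\mu_n(Q)\to\mu(Q)$ — this is exactly where the care in the construction matters, so that the mollification does not push mass outside $\overline Q$. By the remark preceding the lemma, for nonnegative $\mu$ these two facts together give tight convergence; for signed $\mu$ one argues directly from the uniform-on-compacts convergence of $\check\rho_n\ast\varphi$ as above, which already gives tight convergence without splitting into positive and negative parts.

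The main obstacle — really the only subtle point — is the behaviour near the parabolic boundary, and in particular near $\{t=T\}$ and $\partial\Omega$: a naive symmetric mollification would smear a fraction of any boundary-concentrated mass outside $Q$, breaking both the sharp inequality $\|\mu_n\|_{L^1(Q)}\leq|\mu|_{\mathcal M(Q)}$ (in the sense of losing mass) and the convergence $\mu_n(Q)\to\mu(Q)$. This is resolved exactly as indicated: convolve in time only against a one-sided mollifier (supported on $(-1/n,0)$) so that mass at time $t$ is transported to times slightly less than $t$, hence stays within $(0,T)$ after restriction; the spatial convolution is harmless since $\mu$ vanishes on $\partial\Omega$ as an element of the dual of $C_0([0,T)\times\Omega)$, or can be accommodated by an inner-approximation of $\Omega$ if one prefers full rigour. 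Everything else is the routine mollification estimate.
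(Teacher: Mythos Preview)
Your proposal is correct and is precisely the ``standard convolution argument'' that the paper invokes without giving any details; there is nothing to compare. One small nitpick on orientation: since the paper identifies $\mathcal M(Q)$ with the dual of $C_0([0,T)\times\Omega)$, measures may carry mass at $\{t=0\}$ but never at $\{t=T\}$, so your one-sided time mollifier should be supported in $(0,1/n)$ (pushing mass forward, protecting $t=0$) rather than in $(-1/n,0)$.
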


We define the restriction of a measure $\nu$ on a Borel set $E\subset Q$ as 
$
\nu\rest E (B)= \nu(B\cap E)\,,\  \text{for any}\  B\subset Q\,.
$ 
We say that a measure $\nu$ is concentrated on a Borel set $E$ if $\nu\rest E=\nu$.  
If $\mu$ is a bounded measure in $\mathcal{M}(Q)$ then we consider its decomposition into  diffuse and singular parts, that is
$$
\mu=\mu_d +\mu_s,
$$
 where $\mu_d$ is a measure in $\mathcal{M}_0(Q)$, that is $\mu_d$ is absolutely continuous with respect to the the $p$-capacity, while $\mu_s$ is concentrated on a set of zero $p$-capacity.

\vskip0.5em

A classical feature for problems with irregular data is that solutions typically turn out to not belong to the usual energy space, and not even to any  Sobolev space if $p$ is close  to $1$. Because of that, let us  precise  what we mean by $\nabla u$ even if $u$ may not belong to any Sobolev space. We follow the definition of generalized gradient introduced in \cite{B6} for functions $u$ whose truncations belong to a  Sobolev space. For $s$ in $\mathbb{R}$, and $k > 0$, we will use the standard truncation at levels $\pm k$ defined by $T_{k}(s) = \max(-k,\min(s,k))$. We have the following

\begin{definition}
Let $u\,:\,Q\to \mathbb R$ be a measurable function which is almost everywhere finite and such that $T_k(u)\in \psob $ for every $k>0$. Then (see \cite{B6}, Lemma 2.1) there exists a unique vector-valued function $U$ such that
$$
U= \nabla T_k(u)\,\chi_{\{|u|<k\}}\quad \hbox{a.e. in  $Q$}\,,\qquad \forall\,\,  k>0\,.
$$ 
This function $U$ will be called the gradient of $u$, hereafter denoted by $\nabla u$. When $u\in L^1(0,T;W^{1,1}_0(\Omega))$, it coincides with the usual distributional gradient.
\end{definition}

Finally,  we will use the following notation for sequences:
 $\omega (h, n, \delta, ...)$ will  indicate any quantity that vanishes as the parameters go to
their (obvious, if not explicitly stressed) limit point,  with the same order in which they appear,  that is, for instance
$$
\dys\lim_{\delta\rightarrow 0} \limsup_{n\rightarrow +\infty}
\limsup_{h \to 0} |\omega(h,n,\delta)|=0.
$$

\section{Main assumptions and renormalized formulation} \label{II}


 Let $p>1$ and assume that $a \,:\,Q
\times \rn \to \rn$ is a Carath\'eodory function (i.e., $a(\cdot,\cdot,\xi)$
is measurable on $Q$ for every $\xi$ in $\rn$, and $a(t,x,\cdot)$ is
continuous on $\rn$ for almost every $(t,x)$ in $Q$), such that the
following holds:
\be
a(t,x,\xi)  \xi \geq \al\,|\xi|^p\,,
\label{coercp}
\ee
\be
|a(t,x,\xi)| \leq \beta\,[b(t,x) + |\xi|^{p-1}]\,,
\label{cont}
\ee
\be
[a(t,x,\xi) - a(t,x,\eta)] (\xi - \eta) > 0\,,
\label{monot}
\ee
for almost every $(t,x)$ in $Q$, for every $\xi$, $\eta$ in $\rn$, with
$\xi \neq \eta$, where
$\al$ and $\beta$ are two positive constants, and
$b$ is a nonnegative function in $L^{p'}(Q)$.

We consider the initial boundary value problem
\begin{equation}\label{renpb}
\begin{cases}
    u_t-\dive(\adixt {\nabla u} )=\mu & \text{in}\ Q,\\
    u(0,x)=u_0  & \text{in}\ \Omega,\\
 u(t,x)=0 &\text{on}\ (0,T)\times\partial\Omega,
  \end{cases}
\end{equation} 
where $\mu$ is  a nonnegative Radon measure on $Q$ such that $|\mu|(Q)<\infty$ and $u_0\in \elle1$ is a nonnegative function.

\smallskip 

The following definition is the natural extension of the one given in \cite{ppp} for diffuse measures (see also \cite{pe2}). 

\begin{definition}\label{rendef}
 A function $u\in L^1(Q)$ is a renormalized solution of problem \rife{renpb} if $T_k(u)\in \psob$ for every $k>0$ and if there exists a sequence of  nonnegative measures $\nu^k\in \pmisure$ such that:
\be\label{limnu}
 \nuk \longrightarrow \mu_s \ \   \text{tightly} \ \ \text{as}\ k\to+\infty\,,
\ee
and
\be\label{req}
\begin{array}{c}
\dys -\int_Q T_k(u)\,\vfi_t\,dxdt +\int_Q \adixt {\nabla T_k(u)}\nabla \vfi\,dxdt=
\\
\m
\qquad
\dys \int_Q \vfi\,d\mu_d +\int_Q \vfi\,d\nu^k+\into T_k(u_0)\vfi(0)\,dx 
\end{array}
\ee
for every $\vfi\in C^\infty_c([0,T)\times \Omega)$.
\end{definition}

\begin{remark}
Some considerations are in order concerning 
Definition \ref{rendef}. First of all, observe that \rife{req} implies that $T_k(u)_t-\dive(\adixt{\nabla T_k(u))}$ is a bounded measure, and since $T_k(u)\in \psob$ this means that
$$
T_k(u)_t-\dive(\adixt{\nabla T_k(u)})\in W'\cap \pmisure\,.
$$
In particular, we have
$$
T_k(u)_t-\dive(\adixt{\nabla T_k(u)})=\mu_d+\nu^k\qquad \hbox{in $\pmisure$.}
$$
In view of Proposition 3.1 in \cite{ppp}, then $\nu^k$  is a diffuse measure. This is a key fact since it allows us  to recover from equation \rife{req} the standard estimates known for nonlinear potentials. 

Moreover, if $\mu$ is diffuse then Definition \ref{rendef} coincides with Definition 4.1 in \cite{ppp}. This fact is easy to check once we observe that nonnegative measures that vanish  tightly actually strongly converge  to zero in $\mathcal{M}(Q)$. 
\end{remark}

\medskip

First of all, it is possible to consider a  larger class of test functions.  

\begin{proposition}\label{testW} Let $u$ be a renormalized solution in the sense of Definition \ref{rendef}. Then we have
$$
\begin{array}{c}
-\dys\int_Q T_k(u)\,v_t\,dxdt +\int_Q \adixt {\nabla T_k(u)}\nabla v\,dxdt=
\\
\m
\qquad
\dys\int_Q \tilde v\,d\mu_d+\int_Q \tilde v\,d\nu^k+\into T_k(u_0)v(0)\,dx 
\end{array}
$$
for every $v\in W\cap\parelle\infty$ such that $v(T)=0$ (with $\tilde v$ being the unique cap-quasi continuous representative of $v$).
\end{proposition}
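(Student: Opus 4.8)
The plan is to obtain the enlarged class of test functions by a density/approximation argument, starting from the known validity of \rife{req} for $\vfi\in C^\infty_c([0,T)\times\Omega)$. Fix $v\in W\cap\parelle\infty$ with $v(T)=0$. Since $C^\infty_c([0,T)\times\Omega)$ is dense in the space $\{w\in W : w(T)=0\}$ for the natural topology of $W$ (and one can also truncate to preserve the $L^\infty$ bound, up to a harmless constant, via a further mollification/truncation), pick $\vfi_j\to v$ in $W$ with $\|\vfi_j\|_{\parelle\infty}\leq C$ uniformly. The strategy is then to pass to the limit in each of the five terms of \rife{req} written for $\vfi_j$.

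First I would handle the two terms that live in the energy space. The parabolic term $-\int_Q T_k(u)\,(\vfi_j)_t\,dxdt$ requires care because $(\vfi_j)_t\to v_t$ only in $L^{p'}(0,T;V')$, so one cannot simply integrate against $T_k(u)\in \psob\cap\parelle\infty$; instead I would rewrite it, using that $T_k(u)_t-\dive(\adixt{\nabla T_k(u)})=\mu_d+\nu^k$ in $\pmisure$ with $T_k(u)\in W'$, via the integration-by-parts formula for functions in $W$ paired against $W'$-valued time derivatives (the Landes–type regularization in time). This is exactly the point where the identity $\int_0^T\langle w_t,\zeta\rangle + \int_0^T\langle \zeta_t,w\rangle = [\,\cdot\,]_0^T$ is invoked, so that the parabolic term against $v$ is meaningful and the boundary contributions at $t=0$ and $t=T$ appear correctly (the latter vanishing since $v(T)=0$). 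The diffusion term $\int_Q \adixt{\nabla T_k(u)}\nabla\vfi_j$ passes to the limit directly because $\adixt{\nabla T_k(u)}\in (L^{p'}(Q))^N$ by \rife{cont} and $\nabla\vfi_j\to\nabla v$ in $(L^p(Q))^N$; the initial-datum term converges since $\vfi_j(0)\to v(0)$ in $V\hookrightarrow L^2(\Omega)$ and $T_k(u_0)\in L^\infty(\Omega)$.

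The genuinely delicate step — and the one I expect to be the main obstacle — is the passage to the limit in the two measure terms $\int_Q \vfi_j\,d\mu_d$ and $\int_Q \vfi_j\,d\nu^k$, and the appearance of the cap-quasi continuous representative $\tilde v$ in the limit. Since $\mu_d$ and $\nu^k$ are diffuse measures (the latter by the Remark following Definition \ref{rendef}, via Proposition 3.1 of \cite{ppp}), they do not charge sets of zero parabolic $p$-capacity, and the correct object to integrate is the cap-quasi continuous representative. The key is that $\vfi_j\to v$ in $W$ implies, up to a subsequence, that $\vfi_j\to\tilde v$ cap-quasi everywhere in $Q$ (a parabolic analogue of the classical Sobolev capacity convergence result), so that by the uniform bound $\|\vfi_j\|_{\parelle\infty}\leq C$ and dominated convergence with respect to the measures $\mu_d$ and $\nu^k$ — both of which are diffuse and bounded — one gets $\int_Q\vfi_j\,d\mu_d\to\int_Q\tilde v\,d\mu_d$ and likewise for $\nu^k$. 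I would cite the relevant capacitary convergence lemma (as used in \cite{dpp}, \cite{ppp}) rather than reprove it. Collecting the five limits yields the asserted identity, and a routine subsequence argument shows that, the limit being independent of the chosen approximating sequence, it holds for the whole sequence, completing the proof. \finedimo
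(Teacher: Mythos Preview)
Your proposal is correct and follows essentially the same route as the paper: the paper's own proof consists of the single observation that $\nu^k$ is diffuse (from the Remark after Definition~\ref{rendef}) and then defers entirely to \cite[Proposition~4.2]{ppp}, whose proof is precisely the density argument you sketch --- approximate $v$ in $W$ by smooth compactly supported functions with a uniform $L^\infty$ bound, and pass to the limit termwise, the decisive point being that convergence in $W$ yields (up to a subsequence) cap-quasi everywhere convergence so that dominated convergence applies against the diffuse measures $\mu_d$ and $\nu^k$. One minor simplification: your detour through an integration-by-parts formula for the time-derivative term is not really needed, since $T_k(u)\in L^p(0,T;V)$ (it is bounded and lies in $\psob$), so $-\int_Q T_k(u)(\vfi_j)_t$ is just the duality pairing $-\int_0^T\langle(\vfi_j)_t,T_k(u)\rangle_{V',V}$ and passes to the limit directly from $(\vfi_j)_t\to v_t$ in $L^{p'}(0,T;V')$.
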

\begin{proof} 
Since $\nu_k$ is diffuse we get the result reasoning as in \cite[Proposition 4.2]{ppp}\,. 
\end{proof}
\vskip1em

Proposition \ref{testW} essentially  allows us to use test functions that depend on the solution itself in \rife{req}.  
Then, reasoning exactly as in \cite[Proposition 4.5]{ppp}, renormalized solutions can be proved to be distributional solutions and to enjoy some basic a priori estimates.

\begin{proposition}
Let $u$ be a  renormalized solution of \rife{renpb}. Then $u$ satisfies, for every $k>0$ and $\tau\leq T$:
$$
\into \Theta_k (u)(\tau) \, dx + \int_{0}^{\tau} \!\! \into |\nabla \tku |^p \, dx\, dt \leq C\, k\left( \|\mu\|_{\mathcal{M}(Q)}+ \|u_0\|_{\elle1}\right),
$$
where $\Theta_k(s)= \int_0^s T_k(t)dt$.

\noindent Therefore,  $u\in L^\infty(0,T;\elle1)$, $| \nabla u |^{p-1}$ and $a(t,x,\nabla u) \in L^{r}(Q)$ for any $r<\frac{N+p'}{N+1}$. Moreover, $u$ is a distributional solution, that is 
$$
\begin{array}{c}
-\dys\int_Q\, u \varphi_t\,dxdt +\int_Q \adixt {\nabla u}\nabla \varphi\,dxdt=
\dys\int_Q  \varphi \,d\mu\,,
\end{array}
$$
for any $\varphi\in C_{c} ([0,T)\times\Omega)$ and $u(0,x)=u_{0}$ in the sense of $L^{1}(\Omega)$. 
\end{proposition}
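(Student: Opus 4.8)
The plan is to argue entirely on the equations satisfied by the truncations $T_k(u)$, which are the only equations available a priori: as recalled in the Remark after Definition \ref{rendef}, \rife{req} gives, for every $k>0$,
\[
T_k(u)_t-\dive(a(t,x,\nabla T_k(u)))=\mu_d+\nu^k\quad\text{in }\mathcal{M}(Q),
\]
with $\mu_d+\nu^k$ a \emph{diffuse} measure, so that by Proposition \ref{testW} this equation may be tested against every $v\in W\cap L^\infty(Q)$ with $v(T)=0$. The first step is the energy estimate. After the standard localization in time (multiplication by a Lipschitz function of $t$ equal to $1$ on $(0,\tau-\epsilon)$ and vanishing on $(\tau,T)$, then $\epsilon\to0$), I would test the equation for $T_k(u)$ with a Landes-type time regularization of $T_k(u)$ and pass to the limit in the regularization parameter: the parabolic term is bounded below by $\tfrac12\into (T_k(u))^2(\tau)\,dx-\tfrac12\into (T_k(u_0))^2\,dx$, the diffusion term by $\alpha\int_0^\tau\!\into|\nabla T_k(u)|^p\,dx\,dt$ thanks to \rife{coercp}, while the right-hand side, since $|T_k(u)|\le k$ and $\mu_d,\nu^k\ge0$, is at most $k\,(\mu_d(Q)+\nu^k(Q))$. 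As $(T_k(u_0))^2\le k\,|u_0|$ and, by \rife{limnu}, $\nu^k(Q)\to\mu_s(Q)\le\|\mu\|_{\mathcal{M}(Q)}$ (so the masses $\nu^k(Q)$ are controlled by $\|\mu\|_{\mathcal{M}(Q)}$), this yields
\[
\tfrac12\into (T_k(u))^2(\tau)\,dx+\alpha\int_0^\tau\!\into|\nabla T_k(u)|^p\,dx\,dt\ \le\ Ck\big(\|\mu\|_{\mathcal{M}(Q)}+\|u_0\|_{\elle1}\big).
\]

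Next I would derive the $L^\infty(0,T;\elle1)$ bound and then the full estimate for $\Theta_k(u)$. For the first, testing the equation for $T_k(u)$ (with $k$ fixed) with $\tfrac1\delta T_\delta(u)$, $0<\delta<k$ — admissible through the same time regularization, as $|\tfrac1\delta T_\delta(u)|\le1$ — using $\tfrac1\delta\Theta_\delta(s)\uparrow|s|$ as $\delta\to0$ and then letting $k\to\infty$, I obtain $\|u(\tau)\|_{\elle1}\le\|u_0\|_{\elle1}+\mu_d(Q)+\sup_k\nu^k(Q)$, hence $u\in L^\infty(0,T;\elle1)$. Since $\Theta_k(s)=\tfrac12(T_k(s))^2+k(|s|-k)^+$, adding to the energy estimate the elementary bound $k\into(|u(\tau)|-k)^+\,dx\le k\,\|u(\tau)\|_{\elle1}$ gives exactly $\into\Theta_k(u)(\tau)\,dx\le Ck(\|\mu\|_{\mathcal{M}(Q)}+\|u_0\|_{\elle1})$. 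From $\into\Theta_k(u)(\tau)\,dx\ge\tfrac{k^2}{2}\,\mathrm{meas}\{|u(\tau)|\ge k\}$ one then gets $\mathrm{meas}\{|u|\ge k\}\le C/k$ in $Q$, and feeding the bounds just obtained on $\|T_k(u)\|_{\psob}$ and on $\sup_\tau\into|T_k(u)(\tau)|\,dx$ into the classical parabolic Gagliardo--Nirenberg / Marcinkiewicz interpolation (Boccardo--Gallou\"et type) yields $|\nabla u|^{p-1}\in L^r(Q)$ for every $r<\frac{N+p'}{N+1}$; by \rife{cont} and $b\in L^{p'}(Q)\subset L^r(Q)$, the same holds for $a(t,x,\nabla u)$.

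Finally, the distributional formulation is obtained by letting $k\to+\infty$ in \rife{req}: $T_k(u)\to u$ in $L^1(Q)$ (dominated convergence, $u\in L^1(Q)$); $a(t,x,\nabla T_k(u))=a(t,x,\nabla u)\,\chi_{\{|u|<k\}}\to a(t,x,\nabla u)$ in $(L^1(Q))^N$ (dominated convergence, using the integrability just proved); $\nu^k\to\mu_s$ tightly by \rife{limnu}, so $\int_Q\varphi\,d(\mu_d+\nu^k)\to\int_Q\varphi\,d\mu$ for every $\varphi\in C(\overline{Q})$; and $T_k(u_0)\to u_0$ in $\elle1$. One thus recovers $-\int_Q u\,\varphi_t\,dxdt+\int_Q a(t,x,\nabla u)\nabla\varphi\,dxdt=\int_Q\varphi\,d\mu+\into u_0\,\varphi(0)\,dx$ for every $\varphi\in C_c([0,T)\times\Omega)$, which contains both the distributional equation (take $\varphi$ supported in $Q$) and the initial condition $u(0)=u_0$ in the $L^1(\Omega)$ sense.

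I expect the main difficulty to lie in the rigorous use of $T_k(u)$ and $T_\delta(u)$ as test functions in \rife{req}: since $T_k(u)_t$ is merely a measure (belonging to $W'+\mathcal{M}(Q)$), $T_k(u)$ is not directly admissible, and one must go through the time regularization together with the enlarged class of test functions of Proposition \ref{testW}; the whole procedure works precisely because $\mu_d+\nu^k$ is a diffuse measure, which is what makes the pairing $\langle\, T_k(u)_t,\widetilde v\,\rangle$ and the passage to the limit in $\int_Q\widetilde v\,d(\mu_d+\nu^k)$ meaningful on cap-quasi continuous representatives. Beyond this, the only genuinely new point with respect to \cite[Proposition 4.5]{ppp} is the presence of $\nu^k$, which is harmless because $|T_k(u)|\le k$ and, by \rife{limnu}, the masses $\nu^k(Q)$ are bounded uniformly in $k$.
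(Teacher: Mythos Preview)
Your proposal is correct and follows exactly the route the paper indicates: the paper gives no independent proof here but simply refers to \cite[Proposition~4.5]{ppp}, and your sketch reproduces that argument with the single necessary modification---the presence of the extra nonnegative diffuse measure $\nu^k$ on the right-hand side, whose total mass stays bounded thanks to the tight convergence \rife{limnu}. You have also correctly identified the technical device (Landes-type time regularization together with Proposition~\ref{testW} to justify $T_k(u)$ and $T_\delta(u)$ as test functions) and the passage to the distributional formulation by letting $k\to\infty$ in \rife{req}.
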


Now we can state our existence result
\begin{theorem}\label{exis}
Let $\mu\in\mathcal{M}(Q)$ be a nonnegative measure and $0\leq u_0\in \luo$. Then there exists a renormalized solution to problem \rife{renpb}. 
\end{theorem}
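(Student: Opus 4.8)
The plan is to construct the renormalized solution by approximation, using an equidiffuse decomposition of the singular part of $\mu$ as in \cite{ppp}. First I would split $\mu = \mu_d + \mu_s$ and approximate each piece separately: take $f_n \in L^\infty(Q)$ with $f_n \to \mu_d$ in the sense that $f_n$ is a suitable truncation/mollification making the data smooth, and for the singular part pick mollifications $\mu_{s,n} = \rho_n * \mu_s$, which are smooth, nonnegative, converge tightly to $\mu_s$, and — crucially — form an equidiffuse sequence by Proposition \ref{equi}. Also take $u_{0,n} \in L^\infty(\Omega)$ nonnegative with $u_{0,n} \to u_0$ in $L^1(\Omega)$. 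Let $u_n$ solve the approximate problem with data $f_n + \mu_{s,n}$ and initial datum $u_{0,n}$; existence of such $u_n$ (in the variational/energy sense, with bounded data) is classical.

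\textbf{A priori estimates and first compactness.} Next I would derive the standard estimates by testing with truncations $T_k(u_n)$: this gives $u_n$ bounded in $L^\infty(0,T;L^1(\Omega))$, $T_k(u_n)$ bounded in $\psob$ with $\int_Q |\nabla T_k(u_n)|^p \leq Ck(\|\mu\|_{\mathcal{M}(Q)} + \|u_0\|_{L^1})$, and the consequent bounds on $|\nabla u_n|^{p-1}$ and $a(t,x,\nabla u_n)$ in $L^r(Q)$ for $r < \frac{N+p'}{N+1}$, uniformly in $n$. By the usual compactness argument (the equation provides a bound on the time derivatives of $T_k(u_n)$ in $L^1 + \pw-1p'$, so one applies a parabolic Aubin--Simon type lemma), up to a subsequence $u_n \to u$ a.e. in $Q$ and in $L^1(Q)$, $T_k(u_n) \rightharpoonup T_k(u)$ weakly in $\psob$ for each $k$, and $\nabla u_n \to \nabla u$ a.e. — here one does need the almost everywhere convergence of the gradients, which follows from a Landes-type argument or from the now-standard result that approximate solutions with measure data have a.e. convergent gradients; importantly, this is convergence a.e., \emph{not} the strong truncation convergence in the energy space that the paper is at pains to avoid. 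Consequently $a(t,x,\nabla u_n) \rightharpoonup a(t,x,\nabla u)$ weakly in $L^r(Q)$, so one may pass to the limit in the distributional formulation and obtain that $u$ is a distributional solution of \rife{renpb}.

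\textbf{Construction of $\nu^k$ and the truncated equation.} The heart of the proof is to produce, for each $k$, the measure $\nu^k$ and verify \rife{req} and \rife{limnu}. I would set $\mu_n^k := (T_k(u_n)_t - \dive(a(t,x,\nabla T_k(u_n)))) - f_n$, which by the equation for $u_n$ equals $\mu_{s,n} + (\text{terms supported where } |u_n| \geq k)$; more precisely, testing formally, $T_k(u_n)$ solves $T_k(u_n)_t - \dive(a(t,x,\nabla T_k(u_n))) = f_n + \mu_{s,n} + \lambda_n^k$ where $\lambda_n^k$ is a nonnegative measure (for the model case, the boundary term of the truncation) concentrated on $\{|u_n| = k\}$, hence $\nu^k_n := \mu_{s,n} + \lambda_n^k \geq 0$. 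One shows $\nu^k_n$ is bounded in $\mathcal{M}(Q)$ uniformly in $n$ (mass $\leq \|\mu_s\|_{\mathcal{M}(Q)} + $ a term controlled by the energy estimate on the level set), so up to a subsequence $\nu^k_n \rightharpoonup \nu^k$ $*$-weakly; nonnegativity passes to the limit. Passing to the limit in the equation for $T_k(u_n)$ — using the weak convergence of $\nabla T_k(u_n)$, the a.e. convergence of gradients so that $a(t,x,\nabla T_k(u_n)) \rightharpoonup a(t,x,\nabla T_k(u))$ weakly in $L^{p'}$, and the tight convergence $f_n \to \mu_d$ — yields \rife{req} with this $\nu^k$.

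\textbf{The behavior as $k \to \infty$: the main obstacle.} The delicate point is \rife{limnu}: showing $\nu^k \to \mu_s$ \emph{tightly} as $k \to +\infty$. The mass bound on $\nu^k_n$ shows $\nu^k(Q) \leq \|\mu_s\|_{\mathcal{M}(Q)} + \omega(k)$-type control, but one needs two things: (i) that the portion of $\nu^k$ coming from the level sets $\{|u_n| \approx k\}$ vanishes as $k \to \infty$ uniformly — this is where the energy estimate $\int_Q |\nabla T_k(u_n)|^p \leq Ck$ and a careful decomposition of the level-set term are used, together with the equiintegrability/equidiffuseness to prevent mass from escaping to the diffuse side; and (ii) that the $\mu_{s,n}$ part of $\nu^k_n$, which survives the limit in $n$ as some measure concentrated (asymptotically) on the singular set, actually converges to all of $\mu_s$ and to nothing else. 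For (ii) the equidiffuse property is essential: it guarantees that in the limit $n \to \infty$ no mass of $\mu_{s,n}$ is absorbed into the diffuse part of the equation (a diffuse measure cannot concentrate, and an equidiffuse sequence cannot develop singular mass in the limit except where $\mu_s$ already lives), so the "defect measure" is exactly $\mu_s$. I expect the technical core — and the step most likely to require genuine work rather than routine estimates — to be the uniform-in-$k$ control of the level-set contribution to $\nu^k$ and its tight (not merely weak-$*$) convergence, since tightness requires ruling out loss of mass at the parabolic boundary, which one handles by testing with functions close to $1$ and exploiting $\nu^k_n(Q) \to \nu^k(Q)$ together with the explicit mass identities coming from the equation.
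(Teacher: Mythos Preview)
Your overall architecture is right, but there is a genuine error at the core: you invoke Proposition~\ref{equi} to conclude that $\mu_{s,n}=\rho_n\ast\mu_s$ is equidiffuse. Proposition~\ref{equi} applies only when the measure being mollified lies in $\mathcal{M}_0(Q)$, i.e.\ is diffuse. The singular part $\mu_s$ is, by definition, concentrated on a set $E$ with $\capp(E)=0$, and the sequence $\rho_n\ast\mu_s$ is \emph{never} equidiffuse when $\mu_s\neq 0$: for any $\eta>0$ there is an open $U\supset E$ with $\capp(U)<\eta$, yet $\liminf_n(\rho_n\ast\mu_s)(U)\geq \mu_s(U)=\|\mu_s\|_{\mathcal{M}(Q)}$ by tight convergence. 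Hence the sentence ``the equidiffuse property guarantees that no mass of $\mu_{s,n}$ is absorbed into the diffuse part'' has no content, and the tightness step, which you yourself identify as the main obstacle, is not proved.

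The paper uses equidiffuseness on the \emph{other} piece: it sets $\mu_d^n=\rho_n\ast\mu_d$, and this sequence \emph{is} equidiffuse. Combined with the capacitary estimate $\capp(\{u_n>k\})\leq C\,k^{-\min(1/p,\,1/p')}$ of Theorem~\ref{stimcap}, this gives $\int_{\{u_n>k\}}d\mu_d^n=\omega(n,k)$, which is what kills the diffuse contribution in $\nu^k$ as $k\to\infty$. For the singular contribution, and to upgrade weak-$\ast$ convergence to tight convergence, the paper does not use any equidiffuseness of $\mu_s^n$; it constructs cut-off functions $\psi_\delta\in C_c^\infty(Q)$ equal to $1$ on a large compact portion of the singular set and tending to $0$ in the space $S$ (Lemma~\ref{cut}), and splits $\int_Q\varphi\,d\nu^k=\int_Q\varphi\psi_\delta\,d\nu^k+\int_Q\varphi(1-\psi_\delta)\,d\nu^k$. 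The first piece is handled by the distributional convergence; the second (and the term $\int_{\{u_n\leq k\}}(1-\psi_\delta)\,d\mu_s^n$) vanishes because $\int_Q(1-\psi_\delta)\,d\mu_s^n=\omega(n,\delta)$. None of this machinery (Theorem~\ref{stimcap}, Lemma~\ref{cut}, the $\psi_\delta$-splitting) appears in your outline, and without it the tight limit $\nu^k\to\mu_s$ is not established. A related minor point: your formula $\nu^k_n=\mu_{s,n}+\lambda_n^k$ omits the term $-f_n\chi_{\{u_n\geq k\}}$ arising from restricting the data to $\{u_n<k\}$; it is precisely this term that the equidiffuse/capacitary argument disposes of.
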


\section{Proof of Theorem \ref{exis}}\label{III}

In this section we prove Theorem \ref{exis}.  As usual for nonlinear equations with measure data, we  will prove the existence of solutions  through  approximation of the data $\mu$  with smooth functions. Thus, let $\mu_n= (\rho_n \ast\mu)$  and let $u_{n,0}$ be a sequence of functions in $C_c(\Omega)$ that converge to $u_0$ in $\luo$, and consider the following approximation problem
\begin{equation}\label{senzah}
\begin{cases}
    (u_{n})_{t}-\dive(a(t,x,\nabla  u_n)) =\mu_n & \text{in}\ Q,\\
    u_n(0,x)=u_{n,0} & \text{in}\ \Omega,\\
 u_n(t,x)=0 &\text{on}\ (0,T)\times\partial\Omega. 
  \end{cases}
\end{equation}  
The existence of a nonnegative weak solution for problem \rife{senzah} is classical (see for instance \cite{l})

\smallskip
We will need the following basic compactness result which are nowadays classical.
\begin{proposition}\label{pro}
Let $u_n$ be the sequence of solutions for  problem \rife{senzah}.  Then
$$
\|u_n\|_{L^\infty (0,T; L^1 (\Omega))}\leq C,
$$
$$
\int_Q|\nabla T_k (u_n)|^p\ dxdt\leq Ck \qquad \forall k>0\,.
$$
Moreover, there exists a measurable function $u$ such that $T_k (u)\in\psob$ for any $k>0$, $u\in L^\infty (0,T; L^1 (\Omega))$, and, up to  a subsequence,  we have
$$
\begin{array}{l}
u_n\longrightarrow u \qquad \hbox{a.e. in $Q$    and strongly in $L^1 (Q)$,}\\\\
T_k (u_n)\rightharpoonup T_k (u)\qquad \hbox{weakly in $ \psob$  and a.e. in $Q$,}\\\\
\nabla u_n\longrightarrow \nabla u \qquad \hbox{a.e. in $Q$.}\\\\
|\nabla u_n|^{p-2}\nabla u_n\longrightarrow\ \ |\nabla u|^{p-2}\nabla u\qquad  \hbox{ in $\luq$.}
\end{array}
$$
\end{proposition}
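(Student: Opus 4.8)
The plan is to follow the classical route for parabolic problems with $L^1$ data: extract uniform estimates by testing with truncations, pass to a limit by compactness, and finally upgrade to almost everywhere convergence of the gradients. The only genuinely delicate point, magnified here by the possibly singular nature of $\mu$, is this last step.

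\textbf{A priori estimates.} First I would test \rife{senzah} with $T_k(u_n)$. Writing $\Theta_k(s)=\int_0^sT_k(r)\,dr$, using the coercivity \rife{coercp}, the identity $\nabla T_k(u_n)=\nabla u_n\,\chi_{\{|u_n|<k\}}$ and $|T_k(u_n)|\le k$, an integration over $(0,\tau)$ gives
$$
\into \Theta_k(u_n)(\tau)\,dx+\alpha\int_0^\tau\!\!\into|\nabla T_k(u_n)|^p\,dx\,dt\le k\big(\|\mu_n\|_{L^1(Q)}+\|u_{n,0}\|_{\elle1}\big).
$$
Since $\mu_n=\rho_n\ast\mu\ge0$ has mass $\|\mu_n\|_{L^1(Q)}\le|\mu|(Q)$ and $u_{n,0}\to u_0$ in $\elle1$, the right-hand side is $\le Ck$; taking $\tau=T$ yields the energy bound, while the pointwise-in-$\tau$ inequality together with $\Theta_k(s)\ge\frac k2|s|$ on $\{|s|\ge k\}$ gives the $L^\infty(0,T;\elle1)$ bound. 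A Gagliardo--Nirenberg interpolation (the Boccardo--Gallou\"{e}t argument) then upgrades these to the Marcinkiewicz bound $\mis\{|u_n|>k\}\le C\,k^{-(p-1+p/N)}$ and, by \rife{cont}, to a uniform bound of $|\nabla u_n|^{p-1}$ and of $a(t,x,\nabla u_n)$ in $L^r(Q)$ for every $r<\frac{N+p'}{N+1}$, where one may take $r>1$.

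\textbf{Compactness.} Coupling the spatial bound of $T_k(u_n)$ in $\psob$ with a uniform estimate of its time translates deduced from the equation (the standard time-compactness lemma for these problems), Riesz--Fr\'{e}chet--Kolmogorov yields relative compactness of $T_k(u_n)$ in $L^1(Q)$; a diagonal procedure over $k$ gives $T_k(u_n)\to\sigma_k$ a.e. and in $L^1(Q)$. Because $\mis\{|u_n|>k\}\to0$ uniformly in $n$, the $\sigma_k$ are the truncations of a single measurable function $u$ and $u_n\to u$ a.e.; the a priori estimates then provide the uniform integrability needed to pass, via Vitali, to $u_n\to u$ in $L^1(Q)$. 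The weak limit of $T_k(u_n)$ in $\psob$ is identified with $T_k(u)\in\psob$, so $\nabla u$ is well defined in the generalized sense and $T_k(u_n)\rightharpoonup T_k(u)$ weakly in $\psob$.

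\textbf{A.e. convergence of the gradients (the crux).} Here the fact that $\mu$ may be singular, so that $\mu_n$ is \emph{not} equi-integrable and no strong convergence of $T_k(u_n)$ in the energy space can be expected, makes the argument nonstandard. I would nonetheless recover a.e. convergence through the Boccardo--Murat technique: for fixed $k$, test the equation for $u_n$ with $T_\delta\big(T_k(u_n)-\psi_\eta\big)$, where $\psi_\eta$ is a Landes time-regularization of $T_k(u)$ (introduced to tame the time derivative), and pass to the limit in the order $\eta\to0$, $n\to\infty$, $\delta\to0$. Strict monotonicity \rife{monot} bounds the principal term from below, while the contribution of $\mu_n$ is asymptotically harmless because, after the localization produced by the truncation, the concentrating part of the datum acts only where the test function is small, i.e.\ it is carried onto $\{|u|=\infty\}$. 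One deduces $\nabla T_k(u_n)\to\nabla T_k(u)$ in measure, hence a.e. along a subsequence; since $\mis\{|u_n|>k\}\to0$ as $k\to\infty$ uniformly in $n$, a further diagonal argument gives $\nabla u_n\to\nabla u$ a.e. in $Q$. Finally, from this a.e. convergence and the uniform $L^r$ bound of $|\nabla u_n|^{p-1}$ with $r>1$, the sequence $|\nabla u_n|^{p-2}\nabla u_n$ is equi-integrable, so Vitali's theorem yields $|\nabla u_n|^{p-2}\nabla u_n\to|\nabla u|^{p-2}\nabla u$ in $\luq$. I expect this third step to be the main obstacle: handling the time derivative through the Landes regularization and, above all, showing that the non-equi-integrable datum does not spoil the monotonicity argument on the bounded level sets.
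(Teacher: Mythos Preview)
The paper's own proof of this proposition is simply a list of references (\cite{bdgo}, \cite{dpp}, \cite{pe2}, \cite{po}, \cite{ppp}), since the result is by now classical; your sketch is precisely the argument contained in those references (in particular \cite{bdgo} and \cite{po}), so the approach is the right one.

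Two small corrections on your Step~3. First, the order of limits should be $n\to\infty$, then $\eta\to0$, then $\delta\to0$: the Landes regularization $\psi_\eta$ is built from the \emph{limit} $T_k(u)$ and exists precisely so that the time-derivative term can be integrated by parts for fixed $\eta$ as $n\to\infty$; letting $\eta\to0$ first would return you to $T_k(u)$, which lacks the needed time regularity. Second, you overstate the difficulty coming from the possible singularity of $\mu$. The contribution of $\mu_n$ to the monotonicity estimate is disposed of simply by $|T_\delta(\cdot)|\le\delta$, which gives
$$
\Big|\int_Q T_\delta\big(T_k(u_n)-\psi_\eta\big)\,\mu_n\,dxdt\Big|\le \delta\,\|\mu_n\|_{L^1(Q)}\le C\delta,
$$
vanishing as $\delta\to0$ uniformly in $n,\eta$ and independently of any concentration of $\mu$. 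No localization near $\{|u|=\infty\}$ is needed here; this is exactly the device of \cite{bdgo}, and the presence of a singular part in $\mu$ plays no role at this stage.
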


\proof See \cite{bdgo}, \cite{dpp}, \cite{pe2}, \cite{po}, \cite{ppp}.
\qed

\vskip1em

A key tool in our analysis is contained in the following  result proved in \cite{ppp}

\begin{theorem}\label{stimcap}
Let $\mu$ be a nonnegative measure in $\mathcal{M}(Q)\cap\pw-1p'$ and $0\leq u_0\in \elle2$, let $u\in W$ be the solution of
$$
\begin{cases}
    u_t-{\rm div}(a(t,x,\nabla u))=\mu & \text{in}\ Q,\\
u=u_0  & \text{on}\ \{0\} \times \Omega,\\
u=0 &\text{on}\ (0,T)\times\partial\Omega.
  \end{cases}
$$
Then, 
$$
\capp(\{u > k\})\leq C\max\left\{\frac{1}{k^{\frac{1}{p}}},\frac{1}{k^\frac{1}{p'}}\right\} \quad \forall k\ge 1,
$$
where $C >0$ is a constant depending on $\|\mu\|_{\mathcal{M}(Q)}$, $\|u_0\|_{\elle1}$ and $p$.
\end{theorem}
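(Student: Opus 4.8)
The plan is to bound the parabolic capacity of the open super-level set $\{u>k\}$ by exhibiting a single admissible competitor in the definition of $\capp$ and controlling its $W$-norm. Since $u\ge 0$, the natural choice is $v:=\frac1k\tku=\min(u/k,1)$, which satisfies $0\le v\le 1$ and $v\ge\chi_{\{u>k\}}$, so that $\capp(\{u>k\})\le\|v\|_W=\|v\|_{L^p(0,T;V)}+\|v_t\|_{L^{p'}(0,T;V')}$ once we check $v\in W$ (that $\tku_t\in L^{p'}(0,T;V')$ will in fact come out of the estimates below). It then remains to bound the two pieces, and the whole difficulty will be concentrated in the time-derivative part.

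For the first piece I would use the standard energy estimate. Testing the equation with $\tku$ (admissible since $u\in W$), the coercivity \rife{coercp} gives $\alpha\int_Q|\D\tku|^p\le\int_Q \tku\,d\mu-\int_0^T\langle u_t,\tku\rangle$; the time term equals $\int_\Omega\Theta_k(u(T))\,dx-\int_\Omega\Theta_k(u_0)\,dx\ge-k\|u_0\|_{\luo}$ (since $\Theta_k\ge0$ and $\Theta_k(s)\le ks$), while $0\le\tku\le k$ and $\mu\ge0$ give $\int_Q\tku\,d\mu\le k\|\mu\|_{\mathcal{M}(Q)}$. Hence $\int_Q|\D\tku|^p\le Ck$ with $C$ depending only on $\alpha,\|\mu\|_{\mathcal{M}(Q)},\|u_0\|_{\luo}$; the same inequality together with $\Theta_k(s)\ge\frac12 T_k(s)^2$ yields $\|\tku\|_{L^\infty(0,T;\elle2)}^2\le Ck$. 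Dividing by $k$ gives $\|\D v\|_{\pelle{p}}\le Ck^{-1/p'}$ and $\|v\|_{L^p(0,T;\elle2)}\le Ck^{-1/2}$, both bounded by $C\max\{k^{-1/p},k^{-1/p'}\}$ for $k\ge1$.

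For the time derivative I would read off from the equation that $k\,v_t=\tku_t=\dive(\adixt{\D\tku})+\mu_k$, where $\mu_k:=\tku_t-\dive(\adixt{\D\tku})\ge0$ is a bounded measure with $\mu_k(Q)\le\|u\|_{L^\infty(0,T;\luo)}\le C$ (integrate the truncated equation) and with $\mu_k=\mu\rest\{u<k\}+\lambda_k$, $\lambda_k$ the nonnegative flux through the level set $\{u=k\}$. The elliptic part is placed in the $W^{-1,p'}(\Omega)$ component of $V'$: by the growth \rife{cont} and the identity $(p-1)p'=p$ one has $\|\adixt{\D\tku}\|_{\pelle{p'}}\le Ck^{1/p'}$, so this term contributes $\le Ck^{-1/p}$ after dividing by $k$.

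The hard part is the remaining term $\frac1k\mu_k$: one must place the nonnegative measure $\mu_k$ into $L^{p'}(0,T;V')$ with norm $\le Ck^{1/p'}$ and a constant depending only on the mass of $\mu$. This is delicate because a bounded measure need not lie in $V'$ at all, and a careless duality estimate would control $\mu_k$ only through $\|\mu\|_{L^{p'}(0,T;W^{-1,p'}(\Omega))}$, which is useless for the intended application (that norm blows up along the approximation $\mu_n=\rho_n\ast\mu$). This is exactly where the parabolic capacity machinery of \cite{ppp} (in the spirit of Pierre) is needed: one controls the level-set flux $\lambda_k$ through the uniform shell estimate $\int_{\{k<u<k+1\}}|\D u|^p\le C$ (obtained by testing with $T_1((u-k)^+)$, again with mass-only constant), and controls $\mu\rest\{u<k\}$ through the diffuse, indeed equidiffuse, character of the data (Proposition \ref{equi}); together these give the required bound on $\mu_k$ in $L^{p'}(0,T;V')$. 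Collecting the gradient contribution $k^{-1/p'}$, the $L^2$ contribution $k^{-1/2}$ and the time contribution $k^{-1/p}$, and using $k^{-1/2}\le\max\{k^{-1/p},k^{-1/p'}\}$ for $k\ge1$, yields $\|v\|_W\le C\max\{k^{-1/p},k^{-1/p'}\}$ and hence the claimed capacity estimate.
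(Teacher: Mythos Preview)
The paper does not prove this theorem; it is quoted from \cite{ppp}. There is therefore no in-paper argument to compare against, only your attempt to evaluate.

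Your overall strategy is the right template, and the first two pieces are correct: $v=\tfrac1k\tku$ is the natural competitor, the energy estimate gives $\|\D v\|_{\pelle p}\le Ck^{-1/p'}$ and $\|v\|_{L^\infty(0,T;\elle2)}\le Ck^{-1/2}$, and the divergence part of $v_t$ lands in $L^{p'}(0,T;W^{-1,p'}(\Omega))$ with norm $\le Ck^{-1/p}$. The genuine gap is precisely in the step you yourself flag as ``the hard part''.

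Your claimed resolution there does not work. You assert that the shell estimate $\int_{\{k<u<k+1\}}|\D u|^p\le C$ and the equidiffuse property (Proposition~\ref{equi}) together place $\mu_k$ into $L^{p'}(0,T;V')$ with a constant depending only on the mass of $\mu$. Neither ingredient does this. The shell estimate bounds only the total mass $\lambda_k(Q)$; a nonnegative bounded measure need not lie in $L^{p'}(0,T;V')$ at all (think of a Dirac mass when $p\le N$), and when it does its mass says nothing about that norm. Proposition~\ref{equi} is a statement about the \emph{sequence} $(\rho_n\ast\mu_d)$---uniform smallness on small-capacity sets---and is irrelevant to the single measure $\mu\rest\{u<k\}$; in any case, diffuseness does not imply membership in $V'$. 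So the last sentence of your sketch is an assertion without a mechanism.

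Obtaining a bound on $\|v_t\|_{L^{p'}(0,T;V')}$ that depends only on $\|\mu\|_{\mathcal M(Q)}$ and $\|u_0\|_{\luo}$ is exactly the nontrivial content of the result in \cite{ppp}; it requires a more careful use of the structure $V'=W^{-1,p'}(\Omega)+L^2(\Omega)$ (in particular exploiting the $L^\infty(0,T;\elle2)$ control on $T_k(u)$), and cannot be read off from mass bounds on $\mu_k$ alone. Invoking ``the parabolic capacity machinery of \cite{ppp}'' by name is, here, circular: that machinery \emph{is} the proof of the theorem you are trying to establish.
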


In order to work separately  either near to and far from the singular set of $\mu$ we also need to construct suitable cut-off functions. Let us consider the space 
$$
S=\{ z\in L^p (0,T;V); z_t \in L^{p'}(0,T;\w-1p' ) +  \luq \}
$$
endowed with its norm
$$
\| z\|_S= \|z\|_{L^p (0,T;V)}+ \|z_t\|_{L^{p'}(0,T;\w-1p' ) +  \luq }.
$$ 
Then we have the following technical result was proof can be obtained as in \cite{pe2}. 
\begin{lemma}\label{cut}
Let $\mu_s$ be a nonnegative bounded Radon measure concentrated on a set $E$ of zero $p$-capacity. Then, for any $\delta>0$, there exists a compact set $K_\delta\subseteq E$ and a function $\psi_\delta \in C^{\infty}_{c}(Q)$ such that 
$$
\mu_s(E\backslash K_\delta)\leq \delta,  \ \ 0\leq \psi_\delta\leq 1, \ \ \psi_\delta\equiv 1 \ \ \text{on $K_\delta$}, 
$$
and
$$
\psi_\delta \to 0 \ \ \  \text{in $S$ \  as $\delta\to 0$}. 
$$
Moreover, 
$$
\int_Q (1-\psi_\delta) \ d\mu_s=\omega(\delta)
$$
\end{lemma}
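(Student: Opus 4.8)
The plan is to obtain $\psi_\delta$ as a mollified, localized capacitary cut-off attached to a suitable compact subset of $E$, and to read off all the stated properties from the zero-capacity condition combined with the inner regularity of $\mu_s$. First I would fix the compact set: since $\mu_s$ is a finite measure concentrated on $E\subseteq Q$ and $Q=(0,T)\times\Omega$ is open, $\mu_s$ is inner regular, so for every $\delta>0$ there is a compact set $K_\delta\subseteq E$ with $K_\delta\Subset Q$ and $\mu_s(E\setminus K_\delta)\le\delta$. Once $\psi_\delta$ is built with $0\le\psi_\delta\le1$ and $\psi_\delta\equiv1$ on $K_\delta$, the last assertion is immediate: using $\psi_\delta\ge0$, $\psi_\delta=1$ on $K_\delta$, and that $\mu_s$ is concentrated on $E$, one gets $\int_Q(1-\psi_\delta)\,d\mu_s\le\mu_s(Q\setminus K_\delta)=\mu_s(E\setminus K_\delta)\le\delta=\omega(\delta)$. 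So the whole content of the lemma is the construction of $\psi_\delta$ and the convergence in $S$.

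For the construction, by monotonicity of the capacity $\capp(K_\delta)\le\capp(E)=0$, so by the very definition of the parabolic capacity, for any prescribed tolerance there exist an open set $U_\delta$ with $K_\delta\subseteq U_\delta\Subset Q$ and a function $w_\delta\in W$ with $w_\delta\ge\chi_{U_\delta}$ and $\|w_\delta\|_W$ as small as we wish (shrinking the open set witnessing the capacity so that it is compactly contained in $Q$, and replacing $w_\delta$ by its positive part, we may also take $w_\delta\ge0$). I would then set $g_\delta=\Theta_\delta\,T_1(w_\delta)$, where $\Theta_\delta\in C^\infty_c(Q)$ satisfies $0\le\Theta_\delta\le1$ and $\Theta_\delta\equiv1$ on a fixed open set $U'_\delta$ with $K_\delta\subseteq U'_\delta\Subset U_\delta$, and finally mollify, $\psi_\delta=\rho_{\eps_\delta}\ast g_\delta$, with a space-time mollifier at a scale $\eps_\delta<\min\{\mathrm{dist}(K_\delta,\partial U'_\delta),\,\mathrm{dist}(\mathrm{supp}\,\Theta_\delta,\partial Q)\}$. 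Since $T_1(w_\delta)\equiv1$ on $U_\delta$ and $\Theta_\delta\equiv1$ on $U'_\delta$, the function $g_\delta$ equals $1$ on a neighbourhood of $K_\delta$; hence $\psi_\delta\in C^\infty_c(Q)$, $0\le\psi_\delta\le1$, and $\psi_\delta\equiv1$ on $K_\delta$, as required.

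It remains to prove $\psi_\delta\to0$ in $S$, and this is exactly where the choice of the space $S$ is essential. The crucial point is that the truncation operator maps $W$ into $S$ (and \emph{not}, in general, into $W$) with a continuous estimate $\|T_1(w)\|_S\le C\|w\|_W$. Writing the time derivative of a $W$-function as $w_t=\xi+\eta$ with $\xi\in L^{p'}(0,T;W^{-1,p'}(\Omega))$ and $\eta\in L^{p'}(0,T;L^2(\Omega))$, the formal identity $T_1(w)_t=\chi_{\{|w|<1\}}w_t$ shows that the $\eta$-part survives as an $L^1(Q)$ term, because $L^{p'}(0,T;L^2(\Omega))\hookrightarrow L^1(Q)$ on the bounded cylinder, while the $\xi$-part is treated by a duality and approximation argument in the spirit of Landes that keeps it in $L^{p'}(0,T;W^{-1,p'}(\Omega))$ up to an $L^1(Q)$ remainder; this is precisely the extra $L^1(Q)$ freedom built into the time-derivative component of $S$. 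This truncation estimate is the step I expect to be the main obstacle, and it is carried out exactly as in \cite{pe2}.

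Granted this estimate, I would conclude by exploiting the order of the quantifiers. For each fixed $\delta$ the geometry $K_\delta\subseteq U'_\delta\Subset U_\delta$, hence the cut-off $\Theta_\delta$ with its (now fixed) derivatives, is chosen \emph{first}; only afterwards do I invoke the zero-capacity property to select $w_\delta$ with $\|w_\delta\|_W$ so small that $\|g_\delta\|_S=\|\Theta_\delta\,T_1(w_\delta)\|_S\le\delta$. This is legitimate because multiplication by the smooth, compactly supported $\Theta_\delta$ is bounded on each component of $S$ (the term $\Theta_{\delta,t}\,T_1(w_\delta)$ landing in $L^1(Q)$ via $L^p(0,T;V)\hookrightarrow L^1(Q)$, and $\Theta_\delta$ acting boundedly on $L^{p'}(0,T;W^{-1,p'}(\Omega))$ and on $L^1(Q)$), and because the truncation estimate controls $\|T_1(w_\delta)\|_S$ by $\|w_\delta\|_W$. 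Finally, convolution with a probability kernel does not increase the $L^p(0,T;V)$-, $L^{p'}(0,T;W^{-1,p'}(\Omega))$- and $L^1(Q)$-norms, so $\|\psi_\delta\|_S\le\|g_\delta\|_S\le\delta\to0$, which gives $\psi_\delta\to0$ in $S$ and completes the proof.
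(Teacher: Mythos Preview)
Your construction follows the standard route that the paper itself defers to \cite{pe2}: inner regularity for $K_\delta$, a capacitary function $w_\delta\in W$ witnessing $\capp(K_\delta)=0$, truncation into $S$, multiplication by a fixed smooth cut-off $\Theta_\delta$, and mollification. One technical slip deserves correction: replacing $w_\delta$ by its positive part does not in general keep it in $W$, since the map $w\mapsto w^+$ need not preserve the condition $w_t\in L^{p'}(0,T;V')$ (this is precisely why the larger space $S$ is introduced). The painless fix is to drop that step and use $T_1(w_\delta)^+=\max\bigl(0,\min(1,w_\delta)\bigr)$ in place of $T_1(w_\delta)$; this is still a $1$-Lipschitz function of $w_\delta$ vanishing at $0$, so the same $W\to S$ estimate you invoke applies to it, and one obtains $0\le g_\delta\le 1$ directly. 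A second minor point: your narrative couples $U'_\delta\Subset U_\delta$ while later insisting that $\Theta_\delta$ be fixed \emph{before} $w_\delta$, which is slightly circular since $U_\delta$ comes from the capacity. It is cleaner to choose $\Theta_\delta\in C^\infty_c(Q)$ with $\Theta_\delta\equiv1$ near $K_\delta$ depending only on $K_\delta$, then select $w_\delta$ with $\|w_\delta\|_W$ small relative to the (now fixed) constants of $\Theta_\delta$; the set on which $g_\delta\equiv1$ is the intersection of the two neighborhoods of $K_\delta$, and the mollification scale is chosen last. With these adjustments the proof goes through as you describe.
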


%

\begin{proof}[Proof of Theorem \ref{exis}]

The proof of Theorem \ref{exis} will be derived in a few steps. First of all, as we said,  let  
$$
\mu_n=  \rho_n \ast \mu=  \rho_n \ast \mu_d+ \rho_n \ast \mu_{s}^{}:=\mu_d^n+\mu_{s}^{n}. 
$$
Observe that, based on Proposition \ref{equi} then $\mu_d^n=\rho_n \ast \mu_d$ is an equidiffuse sequence of measures. Moreover, $\mu_n$ satisfies the properties of  Lemma \ref{appc1}.
 
 We also define, for fixed $\sigma>0$ 
$$
\dys S_{k,\sigma} (s)=
\begin{cases}
1 & \text{if } s\leq k,\\
0&\text{if }s > k+\sigma,\\
\mathrm{affine} & {\rm otherwise}, 
\end{cases}
$$
and   $h_{k,\sigma}(s):=1- S_{k,\sigma}(u_n)$.

\begin{figure}[!ht]
\centerline{
\begin{picture}(200,80)(-100,-35)
\put(-100,0){\vector(1,0){200}}
 \put(0,-35){\vector(0,1){80}}
\thicklines
   \put(-100,20){\line(1,0){130}}
 \put(30,20){\line(3,-2){30}}
 \put(60,0){\line(1,0){40}}
\thinlines
 \multiput(30,0)(0,7){3}{\line(0,1){3}}
 \put(28,-10){$\scriptstyle k$}
 \put(60,-10){$\scriptstyle k+\sigma$}
 \put(-8,24){$\scriptstyle 1$}
 \put(92,-10){$\scriptstyle s$}
 \put(4,38){$\scriptstyle S_{k,\sigma} (s)$}
\end{picture}
}\caption{The function $S_{k,\sigma} (s)$}
\end{figure}
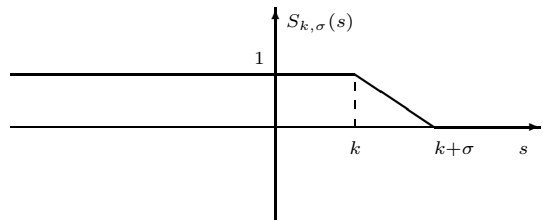

\begin{figure}[!ht]
\centerline{
\begin{picture}(200,80)(-100,-35)
\put(-100,0){\vector(1,0){200}}
 \put(0,-35){\vector(0,1){80}}
\thicklines
   \put(-100,0){\line(1,0){130}}
 \put(30,0){\line(3,2){30}}
 \put(60,20){\line(1,0){40}}
\thinlines
 \multiput(60,0)(0,7){3}{\line(0,1){3}}
 \multiput(0,20)(7,0){9}{\line(1,0){3}}
 \put(28,-10){$\scriptstyle k$}
 \put(60,-10){$\scriptstyle k+\sigma$}
 \put(-8,24){$\scriptstyle 1$}
 \put(92,-10){$\scriptstyle s$}
 \put(4,38){$\scriptstyle h_{k,\sigma} (s)$}
\end{picture}
}\caption{The function $h_{k,\sigma} (s)$}
\end{figure}

 \medskip
{\it Step 1. Estimates in $L^1(Q)$ on the energy term.} For fixed $\sigma>0$ 
  we take $h_{k,\sigma}(u_n)$ in \rife{senzah},  $H_{k,\sigma}(s)=\int_{0}^{s}h_{k,\sigma}(\eta)d\eta$,  to obtain
$$
\begin{array}{l}
 \dys\int_\Omega H_{k,\sigma}(u_n(T)) + \frac{1}{\sigma}\int\limits_{\{k<u_n<k+\sigma\}} a(t,x,\nabla u_n)\nabla u_n \\\\ \dys
= \int_Q \mu^n h_{k,\sigma}(u_n) + \int_\Omega H_{k,\sigma}(u_{0,n}),  
\end{array}
$$
so that, dropping positive terms 
\begin{multline*}
\frac1\sigma\int\limits_{\{k<u_n<k+\sigma\}}\adixt{\nabla u_n}\nabla u_n\,dxdt
\leq \int\limits_{\{u_{n}>k\}} \mu_n\,dxdt
+ \int\limits_{\{u_{0n}>k\}} u_{0n}\,dx,
\end{multline*}
which implies, in particular,
$$
\frac{1}{\sigma}\int_{\{k<u_n\leq k+\sigma\}}a(t,x,\nabla u_n)\nabla u_n\leq C.
$$
Thus, there exists a bounded Radon measure $\lambda_k^n $ such that, as $\sigma$ goes to zero 
 $$
 \frac{1}{\sigma}a(t,x,\nabla u_n)\nabla u_n \chi_{\{k<u_n\leq k+\sigma\}}\rightharpoonup \lambda_k^n\ \ \ \text{$\ast$-weakly in }  \mathcal{M}(Q). 
 $$
 
 \medskip
{\it Step 2. Equation for the truncations.} Now we are able to prove that \rife{req} holds true for $u$. 
To see that, we multiply \rife{senzah} by $S_{k,	\sigma}(u_n)\xi$, where $\xi\in C^{\infty}_c ([0,T)\times\Omega)$ to  obtain, after taking the limit as $\sigma$ vanishes
$$
\begin{array}{l}
\dys T_k (u_n)_t -{\rm div}(a(t,x,\nabla T_k(u_n))) -\mu_d^n   \dys=\lambda_k^n\dys+ \mu_{s}^{n}\chi_{\{u_n\leq k\}}- \mu_{d}^{n}\chi_{\{u_n\geq k\}},
\end{array}
$$
in $\mathcal{D}'(Q)$. 
We define the measure $\nu_n^k$ as
$$
\nu^k_n:=\lambda_k^n+ \mu_{s}^{n}\chi_{\{u_n\leq k\}}- \mu_{d}^{n}\chi_{\{u_n\geq k\}}\,.
$$
Notice that
$$
\|\nu^k_n\|_{\luq}\leq C
$$

so that there exists $\nu^k\in \cM (Q) $ such that 
$$
\nu^k_n\rightharpoonup \nu^k
$$
$\ast$-weak in $ \cM (Q)$.

Therefore, using Proposition \ref{pro}, in the sense of distributions  we have
\begin{equation}\label{eqtk}
T_k (u)_t -{\rm div}(a(t,x,\nabla T_k(u))) =\mu_d  +\nu^k.
\end{equation}

\medskip
{\it Step 3. The limit of $\nu^k$. }
Let us consider the distributional formulation  of \rife{senzah} and let us subtract \rife{eqtk} from it, to obtain, for any $\xi\in C^\infty_c ([0,T)\times\Omega)$
$$
-\int_Q (u_n - T_k(u))\xi_t + \int_Q (a(t,x,\nabla u_n)- a(t,x,\nabla T_k(u)))\nabla\xi
$$
$$
=\int_Q\xi d(\mu_d^n - \mu_d) +\int_Q \xi d(\mu_s^n - \nu^k) + \into \xi(0)(u_{n,0}- T_k(u_0)) \, . 
$$  
Using Proposition \ref{pro} we can pass to the limit with respect to $n$ to obtain 
$$
\nu^k =\mu_s +\omega(n,k)
$$
in $\mathcal{D}'(Q)$.

\medskip
To complete the proof we have to show that the previous limit is actually tight. Let us choose without loss of generality $\varphi\in C^1(\overline{Q})$ (then an easy density argument will show that the result holds with $\varphi\in C(\overline{Q})$). We have
 $$
 \begin{array}{l}
\dys \int_Q\varphi d\nu^k = \int_Q\varphi \psi_\delta d\nu^k +\int_Q\varphi (1-\psi_\delta)d\nu^k, 
 \end{array}
 $$
 where $\psi_{\delta}$ is chosen as in Lemma \ref{cut}. 
Thanks to the previous result we have 
\be\label{quiz}
\dys \int_Q\varphi \psi_\delta d\nu^k=\int_Q \varphi  \psi_\delta d\mu_s +\omega(k)\,. 
\ee
Recalling that $\psi_\delta=1 $ on $K_\delta$, we have 
$$
\dys\int_Q \varphi  \psi_\delta d\mu_s= \int_{K_{\delta}} \varphi  d\mu_s +\int_{E\backslash K_{\delta}} \varphi  \psi_\delta d\mu_s \,.
$$
Now, using Proposition \ref{cut} we get both 
$$
\int_{E\backslash K_{\delta}} \varphi  \psi_\delta d\mu_s  \leq  \delta\|\varphi\|_{L^{\infty}(Q)}
 $$
and (by Lebesgue's theorem)  
$$
\int_{K_{\delta}} \varphi  d\mu_s = \int_{Q} \varphi  d\mu_s+\omega (\delta)\,,
$$
that gathered together with \rife{quiz} gives
$$
\int_Q\varphi \psi_\delta d\nu^k=\int_Q \varphi   d\mu_s +\omega(k,\delta)\,. 
$$
{\it Step 4. Proof Completed.}
To conclude we have to prove that 
$$
\int_Q\varphi (1-\psi_\delta)d\nu^k=\omega(k,\delta). 
$$

From the definition of $\nu^k$ we have  that
$$
 \begin{array}{l}
\dys\int_Q\varphi (1-\psi_\delta)d\nu^k=\lim_n\left(\lim_\sigma \frac{1}{\sigma}\int_{\{k<u_n\leq k+\sigma\}}a(t,x,\nabla u_n)\nabla u_n \varphi(1-\psi_\delta)\right.\\\\
\dys\left. +\int_{\{u_n\leq k\}}\varphi(1-\psi_\delta) d\mu_s^n -\int_{\{u_n> k\}}\varphi(1-\psi_\delta) d\mu_d^n\right). 
 \end{array}
 $$
 Now, using Theorem \ref{stimcap} and recalling that $\mu_d^n$ are equidiffuse, we get
 $$
 \int_{\{u_n> k\}}\varphi(1-\psi_\delta) d\mu_d^n=\omega(n,k)\,.
 $$
Moreover, using Lemma \ref{cut} we get
$$
\left|\int_{\{u_n\leq k\}}\varphi(1-\psi_\delta) d\mu_s^n\right|\leq\|\varphi\|_{L^{\infty}(Q)}\int_Q (1-\psi_\delta) d\mu_s^n=\omega(n,\delta). 
$$
The proof is complete once we prove
\begin{equation}\label{ultima}
\frac{1}{\sigma}\int_{\{k<u_n\leq k+\sigma\}}a(t,x,\nabla u_n)\nabla u_n \varphi(1-\psi_\delta)=\omega(\sigma,n,k,\delta). 
\end{equation}
To do that, we use again \rife{senzah} with $h_{k,\sigma}(u_n)(1-\psi_\delta)$ as test function to obtain
$$
\begin{array}{l}
 \dys\int_Q H_{k,\sigma}(u_n(t,x))(\psi_\delta)_t - \into H_{k,\sigma}(u_{n,0}(x))(1-\psi_\delta(0)) \\
 \\
 \dys 
 + \frac{1}{\sigma}\int\limits_{\{k<u_n<k+\sigma\}} a(t,x,\nabla u_n)\nabla u_n (1-\psi_\delta) - \int_Q a(t,x,\nabla u_n)\nabla \psi_\delta h_{k,\sigma}(u_n)  
\\\\ \dys = \int_Q \mu_d^n h_{k,\sigma}(u_n) (1-\psi_\delta)+ \int_Q \mu_s^n h_{k,\sigma}(u_n) (1-\psi_\delta). 
\end{array}
$$
Using the convergence in $L^1(Q)$ of $u_n$ and $|a(t,x,\nabla u_n)|$, and the regularity of $\psi_\delta$ we easily get 
$$
\int_Q H_{k,\sigma}(u_n(t,x))(\psi_\delta)_t =\omega(n,k), \ \ \text{and} \ \  \int_Q a(t,x,\nabla u_n)\nabla \psi_\delta h_{k,\sigma}(u_n)=\omega(n, k).
$$
Similarly we get rid of the term at $t=0$. 
Moreover, thanks to Theorem \ref{stimcap} and the equidiffuse property of $\mu_d^n$, 
$$
\left| \int_Q \mu_d^n h_{k,\sigma}(u_n) (1-\psi_\delta)\right|\leq \int_{\{u_n>k\}} \mu_d^n (1-\psi_\delta) =\omega(n,k)\,.
$$
Finally, we have
$$
\left|\int_Q \mu_s^n h_{k,\sigma}(u_n) (1-\psi_\delta)\right|\leq \int_Q \mu_s^n  (1-\psi_\delta)=\omega(n,\delta)
$$
where we used Lemma \ref{cut} in the last equality. Gathering together all these facts we get \rife{ultima}. 
\end{proof}

\section{Some further properties and remarks}\label{V}

\subsection{An asymptotic reconstruction property}
As we have seen, in this paper we provide a different, and in some sense more natural, approach to nonlinear parabolic problems with measure data. One of the main points is that we  do not pass through the strong convergence of the truncations. We stress again that this is not only a technical point: in fact, due to the presence of the time derivative part $g_{t}$ in the decomposition of the measure $\mu$, the strong compactness of $T_{k}(u_{n})$ in the energy space is not known in general. What can be proven in many cases is that truncations of suitable translations of the approximating solutions are strongly compact in the right energy space (see \cite{dpp} and \cite{pe2} for further remarks on this fact). 

Our approach by-passes this problem. Anyway, to be consistent with the literature, we want to stress how  our analysis fits with the previous studies.   

{ In \cite{pe2} the role of  property \rife{limnu} in Definition \ref{rendef} is essentially played by a reconstruction property of the singular part of the measure. Indeed, one expects that $\mu_s$ can be obtained asymptotically  as
\be\label{asy}
\lim_{h\to\infty}\int_{\{h-1\leq u<h\}} a(t,x,\nabla u)\nabla u \xi\ dt dx=\int_Q\xi \ d \mu_s
\ee
for any $\xi\in C^{\infty}_{c}([0,T)\times\Omega)$. This property  is known to hold  for any renormalized solution  in the stationary case (\cite{DMOP}). In the evolution case,  it was proved to hold in \cite{pe2}  when the measure $\mu$  has no diffuse part; on the other hand, for the case of general measure, 
the property  was only proved to hold for some translation of $u$ (depending on the decomposition of $\mu_d$) and not for $u$ itself. }

Here we want to emphasize how this type of property is essentially contained in our definition. In fact we prove that  the singular part of the measure $\mu$   is reconstructed by the energy of the approximating solutions on their own  level sets. 
Namely, we have the following.

\begin{proposition}\label{rico}
Let $u_n$ be solution of \rife{senzah}, then
$$
 \lim_{h}\limsup_{n} \int_{\{h-1\leq u_n<h\}} a(t,x,\nabla u_n)\nabla u_n \ \xi= \int_Q \xi \ d\mu_s\,,
$$
  for any $\xi\in C^{\infty}_{c}([0,T)\times\Omega)$.
\end{proposition}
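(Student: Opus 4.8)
The plan is to insert into the equation solved by $u_n$ a test function concentrated on the slab $\{h-1\le u_n<h\}$, and then to separate the contributions of the diffuse and of the singular part of the data.

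First I would set $\theta_h(s)=\min\{(s-h+1)^+,1\}$, so that $\theta_h'(s)=\chi_{\{h-1<s<h\}}$ a.e., and insert $\theta_h(u_n)\,\xi$ in \rife{senzah} (this is admissible since $u_n\in W$ and $\theta_h$ is bounded and Lipschitz). With $\widehat\Theta_h(s)=\int_0^s\theta_h(r)\,dr$ and using $\xi(T)=0$, the chain rule in time gives
$$\int_{\{h-1\le u_n<h\}}a(t,x,\nabla u_n)\nabla u_n\,\xi=\int_Q \mu_n\,\theta_h(u_n)\,\xi-\int_Q a(t,x,\nabla u_n)\nabla\xi\,\theta_h(u_n)+\int_Q\widehat\Theta_h(u_n)\,\xi_t+\int_\Omega\widehat\Theta_h(u_{n,0})\,\xi(0),$$
where one uses that $\nabla u_n=0$ a.e.\ on $\{u_n=h-1\}$, so that the integral of $a\,\nabla u_n\,\xi$ over $\{h-1\le u_n<h\}$ coincides with the one over $\{h-1<u_n<h\}$.

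Next I would dispose of all terms but the singular part of the data. By Proposition \ref{pro}, $u_n\to u$ in $L^1(Q)$ and $a(t,x,\nabla u_n)$ is bounded in $L^r(Q)$ for some $r>1$, so $(u_n)$, $(u_{n,0})$ and $(a(t,x,\nabla u_n))$ are equi-integrable; since moreover $0\le\widehat\Theta_h(u_n)\le u_n\chi_{\{u_n>h-1\}}$, $0\le\theta_h(u_n)\le\chi_{\{u_n>h-1\}}$ and $\mathrm{meas}\{u_n>h-1\}\le C/(h-1)$ uniformly in $n$, the last three integrals above are $\omega(n,h)$. Writing $\mu_n=\mu_d^n+\mu_s^n$, the diffuse contribution is bounded by $\|\xi\|_\infty\,|\mu_d^n|(\{u_n>h-1\})$, which is $\omega(n,h)$ because $\capp(\{u_n>h-1\})\to0$ uniformly in $n$ (Theorem \ref{stimcap}) while $(\mu_d^n)$ is equidiffuse (Proposition \ref{equi}). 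Hence the statement reduces to $\lim_h\limsup_n\int_Q\mu_s^n\,\theta_h(u_n)\,\xi=\int_Q\xi\,d\mu_s$, and, since $\mu_s^n\to\mu_s$ tightly and $\theta_h=1-S_{h-1,1}$ with $0\le S_{h-1,1}(u_n)\le\chi_{\{u_n<h\}}$, to the concentration property
$$\lim_h\limsup_n\int_{\{u_n<h\}}\zeta\,d\mu_s^n=0\qquad\text{for every }0\le\zeta\in C^\infty_c(Q).$$
To prove it, I would fix $\delta>0$ and split with the cut-off $\psi_\delta$ of Lemma \ref{cut}: the part tested against $1-\psi_\delta$ is $\le\|\zeta\|_\infty\int_Q(1-\psi_\delta)\,d\mu_s^n$, hence $\omega(n,\delta)$; for the part tested against $\psi_\delta$, I would repeat the computation above with $\theta_h(u_n)\psi_\delta\zeta$ in place of $\theta_h(u_n)\xi$ — now no boundary-in-time term survives, because $\psi_\delta$ is compactly supported in the open cylinder $Q$ — and then invoke the estimates of Step 4 in the proof of Theorem \ref{exis}, namely \rife{ultima} and the bounds around it, which say precisely that the energy of $u_n$ on its high level sets stays on the region $\{\psi_\delta=1\}$, i.e.\ near the set where $\mu$ is singular; equivalently, as $n,h\to\infty$ the mass of $\mu_s^n$ cannot remain on $\{u_n<h\}$. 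Letting finally $\delta\to0$ closes the argument.

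The main obstacle is exactly this concentration property: everything else is equi-integrability and capacity bookkeeping, but it genuinely requires showing that $u_n$ diverges on the set carrying the singular mass of $\mu_s^n$, and this cannot be obtained by testing with truncation-type functions alone (such manipulations turn out to be circular). One must really combine Lemma \ref{cut} — cut-offs whose support has arbitrarily small $p$-capacity — with the estimate of Theorem \ref{stimcap}, re-examining the construction of the measures $\nu^k$ in the proof of Theorem \ref{exis}. An equivalent route is to write $\int_{\{h-1<u_n<h\}}a(t,x,\nabla u_n)\nabla u_n\,\xi=\int_{h-1}^h\langle\lambda_k^n,\xi\rangle\,dk$ and use the identity $\lambda_k^n=\nu_n^k-\mu_s^n\chi_{\{u_n\le k\}}+\mu_d^n\chi_{\{u_n\ge k\}}$ together with $\nu_n^k\rightharpoonup\nu^k$, $\|\nu_n^k\|\le C$ and $\nu^k\to\mu_s$ tightly; this again reduces matters to the same concentration statement.
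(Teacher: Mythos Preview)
Your outline matches the paper exactly up to the point you yourself flag as the main obstacle: the concentration property
\[
\lim_h\limsup_n\int_{\{u_n<h\}}\zeta\psi_\delta\,d\mu_s^n=0.
\]
Here there is a genuine gap. Testing with $\theta_h(u_n)\psi_\delta\zeta$ produces the term $\int_Q \theta_h(u_n)\psi_\delta\zeta\,d\mu_s^n$, not the complementary $\int_Q (1-\theta_h(u_n))\psi_\delta\zeta\,d\mu_s^n$ that you need, so this computation gives no information on the quantity at hand. The estimate \rife{ultima} concerns the energy tested against $(1-\psi_\delta)$, i.e.\ away from the singular set, and again says nothing about the mass of $\mu_s^n$ on $\{u_n<h\}$ near the singular set. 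Your alternative via $\int_{h-1}^h\langle\lambda_k^n,\xi\rangle\,dk$ is correct but, as you note, reduces to the same unresolved statement. So none of the three routes you sketch actually closes the argument.

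The missing idea is simple and clean, and it is the content of the lemma the paper proves just before Proposition~\ref{rico}: multiply \rife{senzah} by $(k-u_n)^+\psi_\delta$. This is not a bounded truncation-type function (it grows linearly near $u_n=0$), and it is precisely for this reason that the manipulation is not circular. The energy term becomes $-\int_Q a(t,x,\nabla T_k(u_n))\nabla T_k(u_n)\,\psi_\delta$, which has a \emph{favorable} sign and can be dropped; the time-derivative and $\nabla\psi_\delta$ terms are $\omega(n,\delta)$ because $\psi_\delta\to 0$ in $S$; and what remains is
\[
\int_Q \mu_s^n\,(k-u_n)^+\,\psi_\delta \;\le\; \omega(n,\delta).
\]
Since $|1-\theta_h(u_n)|\le (h-u_n)^+$, this immediately yields the concentration property with $k=h$. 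In short: your diagnosis of the obstacle is right, but the cure is a single well-chosen test function, not a re-examination of the construction of $\nu^k$.
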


We need the following result, which turns out to have its own interest, that  shows how the approximating solutions behave around the singular sets where $\mu$ is concentrated.  

\begin{lemma} Let $u_n$ be solution of \rife{senzah}, $k>0$,  and let $\psi_\delta$ be as in Lemma \ref{cut},  then
\begin{equation}\label{stima1}
\int_{Q} \mu_s^{n}(k-u_n)^{+} \psi_{\delta}^{}=\omega(n, \delta). 
\end{equation}

\end{lemma}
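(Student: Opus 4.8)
The plan is to test the approximating equation \eqref{senzah} against a test function built from the cut-off $\psi_\delta$ of Lemma \ref{cut} multiplied by a function of $u_n$ that isolates the region $\{u_n<k\}$, and then to exploit the coercivity \eqref{coercp} to absorb the resulting good sign and the known $L^1$-estimates on $\nabla T_k(u_n)$ and $a(t,x,\nabla u_n)$ to discard the remaining terms. Concretely, I would use $\phi = (\psi_\delta - 1)\, G_k(u_n)$ as test function, where $G_k(s) = (k-s)^+$ (so that $G_k(u_n) = (k-u_n)^+$ is bounded by $k$, vanishes for $u_n \geq k$, and $G_k'(s) = -\chi_{\{s<k\}}$), noting that $\psi_\delta - 1 \leq 0$ and $\psi_\delta-1$ vanishes outside $\supp\psi_\delta$. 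Since $\psi_\delta \in C^\infty_c(Q)$ vanishes near $t=0$ and $t=T$, there is no initial-datum contribution; since $G_k(u_n)$ only sees the truncation $T_k(u_n) \in \psob$, the parabolic and elliptic terms make sense.

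Carrying this out, the time-derivative term produces $\int_Q \partial_t\big[\text{(primitive of }G_k)\big](\psi_\delta-1)$, which after integration by parts becomes $\int_Q \Theta(u_n)(\psi_\delta)_t$ for a bounded primitive $\Theta$; since $u_n \to u$ in $L^1(Q)$ and $\psi_\delta$ is smooth, this is $\omega(n)$ (and, being multiplied by the fixed smooth $\psi_\delta$, stays controlled — in fact one checks it is $\omega(n,\delta)$ after also letting $\delta\to0$ using $\|\psi_\delta\|_S\to0$, though for the statement $\omega(n,\delta)$ suffices). The diffusion term splits: the piece $-\int_{\{u_n<k\}} a(t,x,\nabla u_n)\nabla u_n\,(\psi_\delta-1) = \int_{\{u_n<k\}} a(t,x,\nabla u_n)\nabla u_n\,(1-\psi_\delta) \geq \alpha\int_{\{u_n<k\}}|\nabla u_n|^p(1-\psi_\delta) \geq 0$ by \eqref{coercp}, and the cross piece $\int_Q a(t,x,\nabla T_k(u_n))\nabla\psi_\delta\, G_k(u_n)$ is $\omega(n)$ (uniformly controlled in $\delta$) because $a(t,x,\nabla T_k(u_n))$ is bounded in $L^{p'}$ for fixed $k$, $\nabla\psi_\delta$ is a fixed smooth field, and $\nabla T_k(u_n) \rightharpoonup \nabla T_k(u)$, $a(t,x,\nabla u_n)\to a(t,x,\nabla u)$ in $L^1$. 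On the right-hand side, the diffuse part $\int_Q \mu_d^n (\psi_\delta-1) G_k(u_n)$ is handled by $G_k(u_n)\leq k\chi_{\{u_n<k\}}$ together with Theorem \ref{stimcap} (so $\capp(\{u_n<k\})$ is not what we need — rather we use that on $\{u_n<k\}$, $|1-\psi_\delta|$ is integrated against the equidiffuse $\mu_d^n$, and since $\psi_\delta\to0$ in $S$, one gets $\int_Q|1-\psi_\delta|\,d\mu_d^n$ is small; more simply $\mu_d^n$ being equidiffuse and $\psi_\delta\to 1$ $\capp$-a.e. on larger and larger sets gives $\omega(n,\delta)$), and the singular part gives exactly $-\int_Q \mu_s^n(1-\psi_\delta)G_k(u_n)$, i.e. $-\int_Q\mu_s^n(k-u_n)^+\psi_\delta$ up to $\int_Q\mu_s^n(k-u_n)^+$, which is itself controlled by $k\int_Q\mu_s^n(1-\psi_\delta)\cdots$ — here one must be careful to isolate precisely the quantity \eqref{stima1}.

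Rearranging, all terms are $\omega(n,\delta)$ except the nonnegative coercive term and the target term $\int_Q\mu_s^n(k-u_n)^+\psi_\delta$; since $(k-u_n)^+\psi_\delta \geq 0$ and $\mu_s^n \geq 0$, the target term has a sign, and combined with the nonnegative diffusion term sitting on the same side, one concludes $0 \leq \int_Q \mu_s^n(k-u_n)^+\psi_\delta \leq \omega(n,\delta)$, which is exactly \eqref{stima1}. The main obstacle I anticipate is bookkeeping the order of the limits correctly: one must choose the test function so that the singular term appears with the \emph{right} sign relative to the coercive term (this forces the choice $(\psi_\delta-1)G_k(u_n)$ rather than $\psi_\delta G_k(u_n)$), and one must verify that each error term genuinely vanishes in the iterated sense $\lim_{\delta\to0}\limsup_{n\to\infty}$ — in particular that the smooth-but-$\delta$-dependent factors $(\psi_\delta)_t$ and $\nabla\psi_\delta$ do not spoil uniformity, which is where the convergence $\psi_\delta\to0$ in the space $S$ (not merely pointwise) is essential, exactly as in \cite{pe2}.
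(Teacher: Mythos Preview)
Your choice of test function is the wrong one, and several claims built on it are false. You assert that $\psi_\delta-1$ ``vanishes outside $\supp\psi_\delta$'', but the opposite is true: outside the support of $\psi_\delta$ one has $\psi_\delta-1=-1$. Hence $(\psi_\delta-1)G_k(u_n)$ is \emph{not} compactly supported in time, the initial term at $t=0$ does \emph{not} disappear, and---most importantly---the data term you obtain is
\[
\int_Q \mu_s^n\,(\psi_\delta-1)(k-u_n)^+ \;=\; -\int_Q \mu_s^n\,(1-\psi_\delta)(k-u_n)^+,
\]
which is not the quantity \eqref{stima1}. Your attempt to recover $\int_Q\mu_s^n(k-u_n)^+\psi_\delta$ from this by writing it ``up to $\int_Q\mu_s^n(k-u_n)^+$'' fails, since the latter can be as large as $k\,\mu_s(Q)$ and is neither $\omega(n)$ nor $\omega(\delta)$.

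The sign worry that pushed you toward $\psi_\delta-1$ is unfounded. The paper simply tests with $(k-u_n)^+\psi_\delta$. Then, since $G_k'(u_n)=-\chi_{\{u_n<k\}}$, the elliptic term splits as
\[
\int_Q a(t,x,\nabla T_k(u_n))\nabla\psi_\delta\,(k-u_n)^+\;-\;\int_Q a(t,x,\nabla T_k(u_n))\nabla T_k(u_n)\,\psi_\delta,
\]
and moving the second (coercive) piece to the right-hand side places it with the \emph{same} sign as $\int_Q\mu^n(k-u_n)^+\psi_\delta\ge 0$. The remaining time term $-\int_Q\big(\int_0^{u_n}(k-s)^+ds\big)(\psi_\delta)_t$ and the cross term are both $\omega(n,\delta)$ precisely because $\psi_\delta\to 0$ in $S$ (so $(\psi_\delta)_t\to 0$ in $L^{p'}(0,T;W^{-1,p'})+L^1(Q)$ and $\nabla\psi_\delta\to 0$ in $L^p(Q)$), after first passing $n\to\infty$ via Proposition~\ref{pro}. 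One then drops the two nonnegative terms on the right---including the diffuse part $\int_Q\mu_d^n(k-u_n)^+\psi_\delta$, which requires no equidiffusion argument or capacity estimate here---and obtains \eqref{stima1} directly. In short: replace $(\psi_\delta-1)$ by $\psi_\delta$ and the proof goes through in a few lines; with your choice it does not close.
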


\begin{proof}
We multiply the equation in \rife{senzah} by $(k-u_n)^+\psi_\delta$ where $\psi_\delta$ is given by Lemma \ref{cut} and we integrate over $Q$. We get 
$$
\begin{array}{l}
 \dys-\int_Q (\int_0^{u_n} (k-s)^+ ds)(\psi_\delta)_t   + \int_Q a(t,x,\nabla u_n)\nabla \psi_\delta (k-u_n)^+ \\\\ \dys
= \dys \int_Q a(t,x,\nabla T_k (u_n))\nabla T_k(u_n)\psi_\delta + \int_Q \mu^n(k-u_n)^+ \psi_\delta + \into(\int_0^{u_{n,0}} (k-s)^+ ds) \psi_\delta(0)
\end{array}
$$
 Now, using Proposition \ref{pro}, observing that  $(\int_0^{u} (k-s)^+ ds)\in \psob\cap L^\infty(Q)$, and that $\psi_\delta$ goes to zero in $S$, we get both
 $$
 -\int_Q (\int_0^{u_n} (k-s)^+ ds)(\psi_\delta)_t  =\omega (n,\delta),
 $$
and 
$$
 \int_Q a(t,x,\nabla u_n)\nabla \psi_\delta (k-u_n)^+= \int_Q a(t,x,\nabla T_{k}(u_n))\nabla \psi_\delta (k-u_n)^+=\omega (n,\delta). 
$$
So that, dropping the nonnegative  terms in the right-hand side, we deduce \rife{stima1}.  Let us also observe that,  as a by-product, we also have the following property of the energy of the truncations near the singular set:
$$
\alpha\int_Q |\nabla T_k (u_n)|^p\psi_\delta\leq \int_Q a(t,x,\nabla T_k (u_n))\nabla T_k(u_n)\psi_\delta=\omega (n,\delta). 
 $$
 \end{proof}

\begin{proof}[Proof of Proposition \ref{rico}]
 Let 
$$
\dys \Theta_{h} (s)=
\begin{cases}
1 & \text{if } s\geq h,\\
0&\text{if }s < h-1,\\
\mathrm{affine} & {\rm otherwise}, 
\end{cases}
$$
 and let us take $\Theta_{h} (u_n)\xi$ as test function in \rife{senzah}, where $\xi\in C^{\infty}_{c} ([0,T)\times\Omega)$, to have 
 $$
\begin{array}{l}
 \dys-\int_Q (\int_0^{u_n} \Theta_{h} (s)ds)\xi_t  + \int_{\{h-1\leq u_n<h\}} a(t,x,\nabla u_n)\nabla u_n \ \xi\\\\ \dys  + \int_Q a(t,x,\nabla u_n)\nabla\xi \Theta(u_n) 
= \dys \int_Q \xi \Theta (u_n)\ d\mu_d^n + \int_Q \xi \Theta (u_n)\ d\mu_s^n + \into (\int_0^{u_{n,0}} \Theta_{h} (s)ds)\xi(0)\,.
\end{array}
$$
 Let us analyze the previous terms one by one. First of all, thanks to Proposition \ref{pro} we easily have 
 $$
 -\int_Q (\int_0^{u_n} \Theta_{h} (s)ds)\xi_t=\omega (n,h), 
 $$
 and 
 $$
 \int_Q a(t,x,\nabla u_n)\nabla\xi \Theta(u_n) =\omega (n,h).  
 $$
 Similarly we get rid of the term at $t=0$. 
 Moreover, using the fact that $\mu^n_d $ is equidiffuse and Theorem \ref{stimcap} we have 
 $$
  \int_Q \xi \Theta (u_n)\ d\mu_d^n\leq C\int_{\{u_n\geq h-1\}}d\mu_d^n=\omega(n,h). 
 $$
 Now we deal with the singular part. We have
 $$
  \int_Q \xi \Theta (u_n)\ d\mu_s^n =  \int_Q \xi \ d\mu_s^n+ \int_Q \xi( \Theta (u_n)-1)\ d\mu_s^n.
   $$
 Observe that $| \Theta (s)-1|\leq (h-s)^+$, so that 
 $$
 \begin{array}{l}
\dys \left|\int_Q \xi( \Theta (u_n)-1)\ d\mu_s^n\right|\\\\
\dys\leq \|\xi\|_{L^{\infty}(Q)} \left(\int_Q (h-u_n)^+\psi_\delta\ d\mu_s^n+\int_Q \ (1-\psi_\delta)d\mu_s^n\right)=\omega (n,\delta),
 \end{array}
 $$
 using respectively \rife{stima1} and Lemma \ref{cut}.  
 
 Finally, gathering together all these results we have 
$$
 \lim_{h}\limsup_{n} \int_{\{h-1\leq u_n<h\}} a(t,x,\nabla u_n)\nabla u_n \ \xi= \int_Q \xi \ d\mu_s\,.
$$
\end{proof}

In view of Proposition \ref{rico}, we proved that at least one renormalized solution, in the sense introduced above (Definition \ref{rendef}), satisfies the asymptotic property \rife{asy}. Let us stress that we actually expect such a property to  hold for {\em any} renormalized solution, although the proof might be technically quite involved. We refer the reader to   \cite[Proposition 4.9]{ppp} which contains several technical tricks for this purpose.

\subsection{Extension to the non-monotone case}

Our main existence result can be extended to a larger class of operators as for instance the ones involving a non-monotone dependence with respect to  $u$ in the function $a$. As an example,  we can consider a function $\tilde a : Q \times \re\times \rn \to \rn$ to be a Carath\'eodory function (i.e., $\tilde a (\cdot,\cdot,s,\xi)$
is measurable on $Q$ for every $(s,\xi)$ in $\re\times\rn$, and $\tilde a(t,x,\cdot,\cdot)$ is
continuous on $\re\times\rn$ for almost every $(t,x)$ in $Q$) such that the
following holds:
\be
\tilde a(t,x,s,\xi) \cdot \xi \geq \al|\xi|^p,
\label{coercp2}
\ee
\be
|\tilde a(t,x,s,\xi)| \leq \beta[b(t,x) +|s|^{p-1} +|\xi|^{p-1}],
\label{cont2}
\ee
\be
[\tilde a(t,x,s,\xi) - \tilde a(t,x,s,\eta)] \cdot (\xi - \eta) > 0,
\label{monot2}
\ee
for almost every $(t,x)$ in $Q$, for every $s\in\re$ and for every $\xi$, $\eta$ in $\rn$, with
$\xi \neq \eta$, where, as before, $p > 1$, 
$\al$ and $\beta$ are two positive constants, and
$b$ is a nonnegative function in $L^{p'}(Q)$. Notice that  \eqref{coercp2} ensure  that  $\tilde a(x,t,s,0)=0$ for any $s\in \re$ and a.e. $(t,x)\in Q$.

We can consider the parabolic problem, analogous to \rife{renpb}, associated to $\tilde a$, that is 
\begin{equation}\label{renpbnon}
\begin{cases}
    u_t-\dive(\tilde a(t,x,u {\nabla u} )=\mu & \text{in}\ Q,\\
    u(0,x)=u_0  & \text{in}\ \Omega,\\
 u(t,x)=0 &\text{on}\ (0,T)\times\partial\Omega,
  \end{cases}
\end{equation} 
where, as before, $\mu$ is  a nonnegative Radon measure on $Q$ such that $|\mu|(Q)<\infty$ and $u_0\in \elle1$ is a nonnegative function. The extension of Definition \ref{rendef} to this case is straightforward.

Existence of a renormalized solution for problem \rife{renpbnon} can be reproduced routinely by applying the capacitary estimates given in \cite[Theorem 6.1]{ppp}.


\begin{thebibliography}{9}

%
%
%


\bibitem{bp} P. Baras,  M. Pierre, \emph{Probl\`emes paraboliques semi-lin\'eaires avec donn\'ees mesures,} (French) [\emph{Semilinear parabolic problems with data that are measures,}] Applicable Anal. 18 (1984), no. 1-2, 111--149.

\bibitem{B6} P. B\'enilan, L. Boccardo,  T. Gallou\"et, R. Gariepy, M. Pierre,  J. L. Vazquez, \emph{An $L^{1}-$theory of existence and uniqueness of  elliptic equations,} Ann. Scuola Norm. Sup. Pisa Cl. Sci., \textbf{22} (1995), 241--273.


\bibitem{BCW}
P. B{\'e}nilan, J. Carrillo,  P. Wittbold, \emph{Renormalized entropy solutions of scalar conservation laws}, Ann. Scuola Norm. Sup. Pisa Cl. Sci. (4) \textbf{29} (2000), no.~2, 313--327.


\bibitem{bvq} M.-F. Bidaut-V\'eron,  H. N. Quoc, \emph{Stability properties for quasilinear parabolic equations with measure data and applications}, preprint, arXiv:1310.5253v2 

 




\bibitem{bdgo} L. Boccardo, A.  Dall'Aglio,  T. Gallou\"et, L. Orsina,
 \emph{Nonlinear parabolic equations with measure data,} J. Funct. Anal. 147 (1997), 237--258.
 
%
%
%
%
\bibitem{BrePon:05a} H.~Brezis, A.~C.~Ponce, \emph{Reduced measures for obstacle problems,} 
Adv. Diff. Eq. 10 (2005), 1201--1234.


\bibitem{DMM} G. Dal Maso, A.  Malusa,  \emph{Some properties of reachable solutions of nonlinear elliptic equations with measure data.,} Dedicated to Ennio De Giorgi. {Ann. Scuola Norm. Sup. Pisa Cl. Sci.} (4) 25 (1997), no. 1-2, 375--396. 

\bibitem{DMOP} G. Dal Maso, F. Murat, L. Orsina, A. Prignet,
\emph{Renormalized solutions of elliptic equations with general measure
data,} { Ann. Scuola Norm. Sup. Pisa Cl. Sci.}, {\bf 28} (1999), no. 4, 741--808.

\bibitem{dpp}  J. Droniou, A. Porretta, A. Prignet, \emph{Parabolic capacity and soft measures for nonlinear equations}, Potential Anal. 19 (2003), 99--161.

\bibitem{DP} J. Droniou, A. Prignet, \emph{Equivalence between entropy and renormalized solutions for parabolic equations with smooth measure data},  NoDEA Nonlinear Differential Equations Appl. {\bf 14} (2007), no. 1-2, 181--205. 
 
\bibitem{l}  J.-L. Lions,  \emph{Quelques m\'ethodes de r\'esolution des probl\`emes aux limites non lin\'eaires}, Paris, Dunod, 1969.

\bibitem{malu} A. Malusa,  \emph{A new proof of the stability of renormalized solutions to elliptic equations with measure data,} Asymptot. Anal. 43 (2005), no. 1-2, 111--129.

 \bibitem{pe2} F. Petitta,   \emph{Renormalized solutions of nonlinear parabolic equations with general measure data,} Ann. Mat. Pura ed Appl.,  {187} (4)  (2008),  563--604.

\bibitem{pe3} F. Petitta, \emph{A non-existence result for nonlinear parabolic equations with singular measures as data,} Proc. Roy. Soc. Edinburgh Sect. A 139 (2009), no. 2, 381--392.

\bibitem{ppp1} F. Petitta, A. C. Ponce, A. Porretta, \emph{Approximation of diffuse measures for parabolic capacities}, C. R.  Acad. Sci. Paris, Ser. I {346} (2008), 161--166. 

\bibitem{ppp} F. Petitta, A. C. Ponce, A. Porretta, \emph{Diffuse measures and nonlinear parabolic equations,} Journal of Evolution Equations, 11 (2011), no. 4, 861--905.

 \bibitem{pierre1} M. Pierre, \emph{Parabolic capacity and Sobolev spaces,} Siam J. Math. Anal. 14 (1983),  522--533.
 
\bibitem{po} A. Porretta, \emph{Existence results for nonlinear parabolic equations via strong convergence of truncations,} Ann. Mat. Pura ed Appl.  177 (4) (1999), 143--172.




\end{thebibliography}
\end{document}